\documentclass[12pt]{amsart}
\usepackage[utf8]{inputenc} 
\usepackage{amssymb}
\usepackage{amsfonts}
\usepackage{amsmath}
\usepackage{array}
\usepackage{enumitem}
\usepackage{tikz}
\usepackage[hyperindex=true,frenchlinks=true,colorlinks=true,
  linktocpage,
]{hyperref}

\newtheorem{example}{Example}[section]
\newtheorem{remark}[example]{Remark}

\newtheorem{theorem}[example]{Theorem}
\newtheorem{corollary}[example]{Corollary}

\newtheorem{definition}[example]{Definition}
\newtheorem{proposition}[example]{Proposition}
\newtheorem{algorithm}[example]{Algorithm}
\newtheorem{lemma}[example]{Lemma}

\def\SF{sub\-excedent function}

\def\maj{{\rm maj}}
\def\inv{{\rm inv}} 
\def\Des{\operatorname{Des}}

\def\tw{{\rm tw}}
\def\SG{{\mathfrak S}}
\def\SFE{\mathfrak{SF}}
\def\ie{\emph{i.e.}}
\def\goth{\mathfrak}

\def\CDes{\operatorname{DC}}
\def\GDes{\operatorname{GDes}}
\def\GC{\operatorname{GC}}

\def\Rec{\operatorname{Rec}}
\def\LC{\operatorname{LC}}
\def\tto{{\rm tot}}

\def\NCSF{{\bf Sym}}
\def\312{31\!-\!2}
\def\BST{\operatorname{BST}}

\definecolor{light-gray}{gray}{0.75}

\author{Arthur Nunge}
\title{An equivalence of multistatistics on permutations}
\keywords{Bijections, statistics, permutations, Catalan objects}

\begin{document}
\maketitle
\begin{abstract}
  We prove a conjecture of J.-C. Novelli,
  J.-Y. Thibon, and L. K. Williams (2010) about an equivalence of two
  triples of statistics on permutations. To prove this conjecture, we
  build a bijection through different combinatorial objects,
  starting with a Catalan-based object related to the PASEP.
  As a byproduct of this research, we also provide a new co-sylvester
  class-preserving bijection on permutations.
\end{abstract}

\section{Introduction}

The algebra of noncommutative symmetric functions \NCSF~\cite{NCSF1}
has been studied in algebraic combinatorics during the past twenty
years. It is a graded algebra
$\NCSF=\displaystyle\bigoplus_{n\geq 0}\NCSF_n$, where $\NCSF_n$ is of
dimension~$2^{n-1}$ for $n\geq 1$, so that its bases are indexed by
integer compositions, that is, finite sequences of positive integers
of sum~$n$. There are many purely combinatorial problems related to
this algebra such as, for example, the explicit description of the
relations between different bases through their transition
matrices. In this paper, we shall be interested in the basis
introduced by Tevlin in~\cite{Tev}, the so-called monomial basis of
\NCSF, for which the transition matrices~${\goth M}^{(n)}$ with the
ribbon basis have been described in~\cite{HNTT}.

In this last paper, the authors prove that the entry
${\goth M}^{(n)}_{I,J}$, indexed by two integer compositions~$I$
and~$J$, is equal to the number 
of permutations satisfying $\GC(\sigma) = I$ and $\Rec(\sigma) = J$,
where $\GC$ and $\Rec$ are two statistics that will be recalled
later. Some properties of this basis correspond to 
properties of the PASEP (Partially Asymmetric Simple Exclusion
Process), a physical model in which particles hop back and forth (and
in and out) of a one-dimensional lattice. More
precisely, in the study of the basis of Tevlin, the sum of the
entries of row $I$ of the transition matrix ${\goth M}^{(n)}$
corresponds to
the unnormalized steady-state probability of the state of the PASEP
with $n-1$ sites corresponding to $I$.

In the combinatorial version of the PASEP, the steady-state
probabilities are computed through the enumeration of the so-called
permutation tableaux, an object introduced in~\cite{CW}. These objects
are certain fillings of Ferrers diagrams. There
is a simple bijection between a state of the PASEP with~$N$ sites and
a Ferrers diagram of semi-perimeter~$N$ such that the steady-state
probability of this state of the PASEP corresponds to the sum of the
fillings of the Ferrers diagram as permutation tableaux. Moreover,
in~\cite{SW} the authors present a bijection between permutation
tableaux and permutations such that the permutation tableaux of a
given shape are sent to the permutations of a given~$\GC$ statistic.

In the PASEP context, there exists a natural $q$-statistic on
permutation tableaux that becomes the number of $\312$ patterns on
permutations (denoted by $\tto$) thanks to the bijection
of~\cite{SW}. It is then natural to define a $q$-analog of the
basis of Tevlin as the functions whose transition matrices
${\goth M}^{(n)}(q)$ with the ribbon basis are such that
${\goth M}^{(n)}_{I,J}(q)$ is the sum of the $q^{\tto(\sigma)}$ for
all~$\sigma$ satisfying $\GC(\sigma) = I$ and $\Rec(\sigma) = J$.

In~\cite{NTW}, the authors studied those matrices in an algebraic
way. However, the statistics $\GC$ and $\tto$ on permutations were not
appropriate for an algebraic study. So they built
suitable matrices~$\widetilde{{\goth M}}^{(n)}(q)$ for their algebraic
purpose through other statistics on permutations also recalled later
($\LC$, $\Rec$, and $\alpha$). Then, the
entry~$\widetilde{{\goth M}}^{(n)}_{I,J}(q)$ is the sum of the
$q^{\alpha(\sigma)}$ for all~$\sigma$ satisfying $\LC(\sigma) = I$ and
$\Rec(\sigma) = J$. They conjectured that their matrices are the same
as the~${\goth M}^{(n)}(q)$, or equivalently that the triples of
statistics $(\GC, \Rec, \tto)$ and $(\LC, \Rec, \alpha)$ are
equidistributed.

The aim of this paper is to prove this conjecture bijectively. As the
triple of statistics $(\LC, \Rec, \alpha)$ has a natural description
on \SF s, in order to gain in readability, we build a bijection
from permutations to \SF s sending the triple of statistics $(\GC,
\Rec, \tto)$ to $(\LC, \CDes, \alpha)$ where $\CDes$ is also defined
later. The global bijection is described as
a sequence of bijections through different combinatorial objects:
\begin{equation}
  \label{eqSchemaBijs_LH}
  \text{P} \overset{\psi_{FV}}{\longleftrightarrow} \text{LH}
  \overset{\psi}{\longleftrightarrow} \text{LLH}
  \overset{\phi_{2}}{\longleftrightarrow} \text{DWSF}
  \overset{\phi_{1}}{\longleftrightarrow} \text{SF},
\end{equation}
where P, LH, LLH, DWSF, and SF are respectively Permutations, Laguerre
Histories, Large Laguerre Histories, Decreasing Weighted Sub\-excedent
Functions, and Subexcedent Functions which are all recalled or defined
below. The main idea is to send permutations and \SF s to weighted Catalan objects
such that the weight corresponds to the third statistics in the
triples. To do that we use the well-known Fran\c con-Viennot
bijection $\psi_{FV}$ on one side, and we define a new bijection
$\phi_1$ building a weighted Catalan object from a subexcedent
function. Laguerre histories and large Laguerre histories are both
weighted Motzkin paths with similar conditions on the weights. In
order to avoid confusion and to simplify the description of the
map~$\psi$ we shall use another common representation on those objects
in terms of Weighted Dyck Paths (WDP). Diagram~\eqref{eqSchemaBijs_LH}
becomes:
\begin{equation}
  \label{eqSchemaBijs}
  \text{P} \overset{\psi_{FV}}{\underset{\ref{subSectFV}}\longleftrightarrow} \text{WDP}
  \overset{\psi}{\underset{\ref{subSectInvol}}\longleftrightarrow} \text{WDP}
  \overset{\phi_{2}}{\underset{\ref{subSectPhi2}}\longleftrightarrow} \text{DWSF}
  \overset{\phi_{1}}{\underset{\ref{subSectPhi1}}\longleftrightarrow} \text{SF}.
\end{equation}
Under each bijection in the diagram we indicate in which subsection
of the paper it is presented. 

In the last part of the paper we use the connection between large
Laguerre histories and permutations through a variation of $\psi_{FV}$
that we call $\psi_{FV}^0$
to define another triple of statistics close to $(\GC, \Rec, \tto)$
also giving a combinatorial interpretation for the entries of
${\goth M}^{(n)}(q)$. By combining $\psi_{FV}^0$ with $\psi_{FV}$ and
$\psi$ through the following diagram
\begin{equation}
  \text{P} \overset{\psi_{FV}}{\longleftrightarrow} \text{WDP}
  \overset{\psi}{\longleftrightarrow} \text{WDP}
  \overset{\psi_{FV}^0}{\longleftrightarrow}
  \text{P},
\end{equation}
we define a new bijection on permutations preserving the
co-sylvester classes, classes inherited from a monoid structure related
to the combinatorics of binary search trees~\cite{PBT}.

In Section~\ref{sectionBackground} we give the background and
notations needed in the rest of the paper.
In Section~\ref{sectionPermToDyckPath} we describe the
bijections $\psi_{FV}$ and $\psi$ which are natural bijections leading
to objects
where the statistics $\GC$ and $\tto$ are naturally defined. The
object obtained is a pair consisting of a Catalan object and a
weight. In Section~\ref{sectionSFToDyckPath} we begin by building
a Catalan object associated with a weight from \SF s through $\phi_1$
and then describe $\phi_2$, a Catalan bijection between decreasing \SF
s and Dyck paths.  In Section~\ref{sectionGlobal} we prove the
conjectures of~\cite{NTW} and give some properties associated with the
global bijection. Finally, in Section~\ref{sectionOtherConv} we
introduce a new co-sylvester class-preserving bijection and a new
triple of statistics.

\section{Notations and background}
\label{sectionBackground}
\subsection{Permutations, compositions, and \SF s}

Let us first fix our notations concerning permutations. We represent a
permutation $\sigma$ as a word $\sigma_1\sigma_2\cdots\sigma_n$ such
that $\sigma_i = \sigma(i)$. For all the forthcoming examples we fix
$\tau=528713649$. We shall sometimes use the notation $[n]=[1,n]$ to
denote the set $\{1,2,\cdots,n\}$.

A {\it recoil} of a permutation $\sigma$ is a value $i$ such that
$i+1$ is on the left of $i$ or equivalently such that
$\sigma^{-1}_i>\sigma^{-1}_{i+1}$.  The recoil set of $\sigma$ is the set of
the values of recoils.  For example, the recoil set of $\tau$ is
$\{ 1, 4, 6, 7\}$. A $\312$ pattern of $\sigma$ is a pair $(i,j)$
such that $j > i+1$ and $\sigma_{i+1} < \sigma_j < \sigma_i$. We
denote by $\tto(\sigma)$ the number of $\312$ patterns of $\sigma$. For
example, $\tto(\tau) = 5$. We shall often need the
number of times that a value appears as a $2$ in a $\312$ pattern. We
define $\tto_k(\sigma)$ as the number of times $k$ appears as a $2$ in a
$\312$ pattern in $\sigma$.

A \emph{composition} of an integer $n$ is a sequence
$I=(i_1,\dots,i_r)$ of positive integers of sum $n$.  The integer $r$
is called the \emph{length} of the composition.  The \emph{descent
  set} of $I$ is $\Des(I) = \{ i_1,\ i_1+i_2, \ldots ,
i_1+\dots+i_{r-1}\}$.

The {\it major index} $\maj(I)$ of a composition is the sum of the
values in the descent set of $I$. For example, $\maj(1,3,2,1,2)=18$.

The \emph{recoil composition} $\Rec(\sigma)$ of a permutation
$\sigma\in\SG_n$ is the composition of $n$ whose descent set is the
recoil set of $\sigma$. For example, $\Rec(\tau)=(1,3,2,1,2)$.

The \emph{Genocchi descent set}~\cite{HNTT} of a permutation
$\sigma\in\SG_n$, denoted by $\GDes(\sigma)$, is the set of values immediately followed by a
smaller value (it is sometimes called the descent tops of
$\sigma$~\cite{SW}).
The \emph{Genocchi composition of descents} (or
G-compos\-ition, for short) $\GC(\sigma)$ of a permutation is the
integer composition $I$ of $n$ whose descent set is
$\{d-1~|~d\in\GDes(\sigma)\}$. For example,
we have $\GDes(\tau) = \{5,6,7,8\}$ and $\GC(\tau)=(4,1,1,1,2)$.

About the statistic $\LC$, the definition on permutations given
in~\cite{NTW} is an algorithm and starts by taking the Lehmer code of
the inverse permutation, a bijection between permutations and
{\SF}s. We shall directly define it on {\SF}s as the other two
statistics of the triple $(\LC, \Rec, \alpha)$ are easily defined on
this object. 

A \emph{\SF} of size $n$ is a word $u$ of size $n$ on the alphabet of
nonnegative integers such that for each $i \in \{ 1, \ldots, n\}$ we
have $u_i \leq n-i$. We denote by $ \SFE_n$ the set of \SF s of
size~$n$.  They are enumerated by $n!$ as they are in bijection with
permutations through the Lehmer code.  The \emph{statistic $LC$} is
defined on a {\SF} $u$ as follows.
\begin{itemize}
\item Set $S=\emptyset$ and read $u$ from right to left.  At each
  step, if the entry $k$ is strictly greater than the size of $S$, add
  the $(k-|S|)$-th element of the sequence $[1,n]$ not considering the
  elements of $S$.
\item The set $S$ is the descent set of a composition $C$, and
  $\LC(u)$ is the mirror image $\overline{C}$ of $C$.
\end{itemize}
For example, with $u= 315503200$, $S$ is $\emptyset$ at first, then
the set $\{2\}$ at the third step as the third letter from the right
is a $2$ and $S$ was empty, then $\{2,3\}$ (fourth step), then
$\{2,3,5\}$ (sixth step), then $\{2,3,4,5\}$ (seventh step).  Hence
$C$ is $(2,1,1,1,4)$, so that $\LC(u)=(4,1,1,1,2)$.

Define the
\emph{number of inversions} of a {\SF} as the sum of its values. We
also define \emph{the descent set} of a {\SF} as
\begin{equation}
  \Des(u) = \{ i\in \{1, \ldots, n-1\} ~|~ u_i > u_{i+1}\}
\end{equation}
and the \emph{descent composition} of $u$ (denoted by $\CDes(u)$) as
the composition of size $n$ whose descent set is $\Des(u)$. One easily
checks that $\CDes(315503200) = (1,3,2,1,2)$. This definition of the
descent composition of a {\SF} corresponds to the recoil composition of
the permutation before taking the Lehmer code of its inverse.

\subsection{Different weighted paths}
\label{sectionPaths}
Recall that a Dyck path of size $n$ is a path with $n$ increasing
steps and $n$ decreasing steps that never goes below the horizontal
axis. A Motzkin path of size $n$ is a path with $n$ steps among
increasing steps, decreasing steps, and horizontal steps that never
goes below the horizontal axis. For any path $P$ we denote by~$P_i$
the $i$-th step of this path. We call the height of a step the
distance between the beginning of this step and the horizontal axis.
A weight associated with a path is a nonnegative integer vector with the
same size as the size of the path.

\begin{definition}
  A Laguerre history $L$ of size $n$ is a weighted Motzkin path of size
  $n$ with two kinds of horizontal steps ($\longrightarrow$ and
  $\dashrightarrow$). The weight $w$ is a sequence of integers of size
  $n$ satisfying for all $i \leq n$,
  \begin{itemize}
  \item $0 \leq w_i \leq h_i$ if $L_i$ is $\nearrow$ or
    $\longrightarrow$;
  \item $0 \leq w_i \leq h_i-1$ if $L_i$ is $\searrow$ or
    $\dashrightarrow$;
  \end{itemize}
  where $h_i$ is the height of the $i$-th step of $L$.

  A large Laguerre history of size $n-1$ is a weighted Motzkin path of
  size $n-1$ with two kinds of horizontal steps where for any
  $i\leq n-1$, the weight $w$ satisfies $0 \leq w_i \leq h_i$.
\end{definition}

Figure~\ref{exLH} represents a Laguerre history of size $9$ and a
large Laguerre history of size $8$. For a better readability we only
wrote the strictly positive~$w_i$.

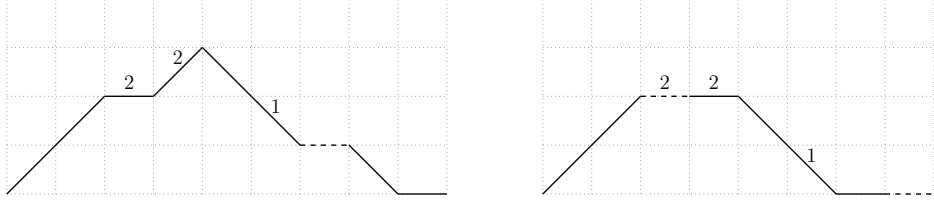
\begin{figure}[h!t]
  \begin{center}
    \begin{tikzpicture}
      \node (F1) at (0,0) {
        \scalebox{0.65}{
          \begin{tikzpicture}
  \draw [thin, gray, dotted] (0, 0) grid (9, 4);
  \draw [thick] (0, 0) -- (1, 1) node[midway,scale=1,above]{};
  \draw [thick] (1, 1) -- (2, 2) node[midway,scale=1,above]{};
  \draw [thick] (2, 2) -- (3, 2) node[midway,scale=1,above]{2};
  \draw [thick] (3, 2) -- (4, 3) node[midway,scale=1,above]{2};
  \draw [thick] (4, 3) -- (5, 2) node[midway,scale=1,above]{};
  \draw [thick] (5, 2) -- (6, 1) node[midway,scale=1,above]{1};
  \draw [thick, dashed] (6, 1) -- (7, 1) node[midway,scale=1,above]{};
  \draw [thick] (7, 1) -- (8, 0) node[midway,scale=1,above]{};
  \draw [thick] (8, 0) -- (9, 0) node[midway,scale=1,above]{};
\end{tikzpicture}
         }
      };
      \node (F2) at (6.8,0) {
        \scalebox{0.65}{
          \begin{tikzpicture}
  \draw [thin, gray, dotted] (0, 0) grid (8, 4);
  \draw [thick] (0, 0) -- (1, 1) node[midway,scale=1,above]{};
  \draw [thick] (1, 1) -- (2, 2) node[midway,scale=1,above]{};
  \draw [thick, dashed] (2, 2) -- (3, 2) node[midway,scale=1,above]{2};
  \draw [thick] (3, 2) -- (4, 2) node[midway,scale=1,above]{2};
  \draw [thick] (4, 2) -- (5, 1) node[midway,scale=1,above]{};
  \draw [thick] (5, 1) -- (6, 0) node[midway,scale=1,above]{1};
  \draw [thick] (6, 0) -- (7, 0) node[midway,scale=1,above]{};
  \draw [thick, dashed] (7, 0) -- (8, 0) node[midway,scale=1,above]{};
\end{tikzpicture}
         }
      };
    \end{tikzpicture}
    \caption{A Laguerre history of size 9 and a large Laguerre history
      of size 8.}
    \label{exLH}
  \end{center}
\end{figure}

Both objects are in bijection and enumerated by $n!$. A common way to
build a bijection between these is to represent each by a
weighted Dyck path.

\begin{definition}
  A weighted Dyck path of size $n$ is a Dyck path
  associated with a weight both of size $n$. For all $i$,
  the weight satisfies $w_i \leq (h_i-1)/2$ where $h_i$ is the height
  of the $2i$-th step of the Dyck path.
\end{definition}

An example of a weighted Dyck path of size $9$ is given in
Figure~\ref{figHist}. For a better
readability we grouped the steps two by two between vertical dashed
lines and only wrote the strictly positive $w_i$ above steps $D_{2i-1}$
and $D_{2i}$. In this representation, $h_i$ is exactly the height of
the meeting point of the $(2i-1)$-st and the $2i$-th steps of the Dyck
path.

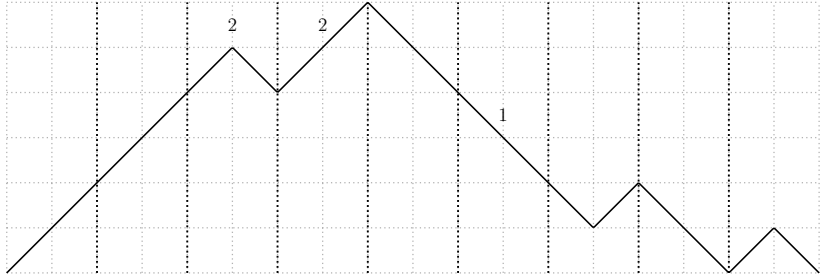
\begin{figure}[h!t]
  \begin{center}
    \scalebox{0.6}{
      \begin{tikzpicture}
  \draw[dotted, thick, color=gray!60] (0, 0) grid (18, 6);
  \draw[rounded corners=1, color=black, line width=1] (0, 0) -- (1, 1)
  -- (2, 2) -- (3, 3) -- (4, 4) -- (5, 5) -- (6, 4) -- (7, 5) -- (8,
  6) -- (9, 5) -- (10, 4) -- (11, 3) -- (12, 2) -- (13, 1) -- (14, 2)
  -- (15, 1) -- (16, 0) -- (17, 1) -- (18, 0);

  \draw[dotted, very thick] (2, 0) -- (2, 6); \draw[dotted, very
    thick] (4, 0) -- (4, 6); \draw[dotted, very thick] (6, 0) -- (6,
  6); \draw[dotted, very thick] (8, 0) -- (8, 6); \draw[dotted, very
    thick] (10, 0) -- (10, 6); \draw[dotted, very thick] (12, 0) --
  (12, 6); \draw[dotted, very thick] (14, 0) -- (14, 6); \draw[dotted,
    very thick] (16, 0) -- (16, 6);
  
  \node (pond1) at (5, 5.5) {2}; \node (pond2) at (7, 5.5) {2}; \node
  (pond2) at (11, 3.5) {1};
\end{tikzpicture}
 }
    \caption{A weighted Dyck path of size 9.}
    \label{figHist}
  \end{center}
\end{figure}

The map sending a Laguerre history to a weighted Dyck path is
described by the following algorithm.

\begin{algorithm}
  \label{algoLH}
  Let $L$ be a Laguerre history of size $n$ and $i \in [n]$. Then
  the weighted Dyck path $D$ associated with $L$ satisfies
  \begin{itemize}
  \item $D_{2i-1} = D_{2i} = /$ if $L_i = \nearrow$,
  \item $D_{2i-1}D_{2i} = /\backslash$ if $L_i = \longrightarrow$,
  \item $D_{2i-1}D_{2i} = \backslash/$ if $L_i = \dashrightarrow$,
  \item $D_{2i-1} = D_{2i} = \backslash$ if $L_i = \searrow$.
  \end{itemize}
  The weight remains the same.
\end{algorithm}

To build a weighted Dyck path from a large Laguerre history $L$,
start by applying Algorithm~\ref{algoLH} to $L$ then add an extra $/$
step at the beginning and $\backslash$ step at the end. The weight
also remains the same, we just add a $0$ at its end to obtain a weight
of size $n$.

Applying the corresponding algorithm to both paths of
Figure~\ref{exLH} gives the weighted Dyck path of
Figure~\ref{figHist}.

The fact that both objects are enumerated by $n!$ comes from the
Fran\c con-Viennot bijection.

\subsection{The Fran\c con-Viennot bijection}

The Fran\c con-Viennot bijection, first described in~\cite{FV}, is a
bijection between permutations and Laguerre histories. We shall
describe a bijection between permutations and weighted Dyck paths
(defined below) that can be obtained by applying
Algorithm~\ref{algoLH} to the result of the version  of Corteel of
the Fran\c con-Viennot bijection described in~\cite{Cor}.

This bijection corresponds to the first part of
Diagram~\eqref{eqSchemaBijs}.

\begin{equation*}
  \text{P} \overset{\psi_{FV}}{\longleftrightarrow} \text{WDP}
  \textcolor{light-gray}{\overset{\psi}{\longleftrightarrow} \text{WDP}
  \overset{\phi_{2}}{\longleftrightarrow} \text{DWSF}
  \overset{\phi_{1}}{\longleftrightarrow} \text{SF}}
\end{equation*}

In order to compute each step of the Dyck path, the
Fran\c con-Viennot bijection compares the values of $\sigma$ with both
of their neighbors. In order to do so with both the first and the last
values of $\sigma$ we use the convention $\sigma_0 = 0$ and
$\sigma_{n+1} = n+1$.

\begin{algorithm}[Fran\c con-Viennot]
  \label{algoFV}
  Let $\sigma \in \SG_n$, $j \in \{1, \ldots, n\}$ and $k = \sigma_j$.
  Then in the Dyck path of $\psi_{FV}(\sigma)$ we have
  \begin{itemize}
  \item $D_{2k-1} = /$ if $\sigma_j < \sigma_{j+1}$ and $D_{2k-1} =
    \backslash$ otherwise,
  \item $D_{2k} = \backslash$ if $\sigma_{j-1} < \sigma_{j}$ and
    $D_{2k} = /$ otherwise.
  \end{itemize}
  The weight is built as follows: $w_k$ is equal to
  $\tto_k(\sigma)$, the number of $\312$ patterns where $k$ stands for
  the~$2$.
\end{algorithm}

For example, if we consider $\sigma = 528713649$, its image by
$\psi_{FV}$ is the weighted Dyck path of Figure~\ref{figHist}. One can
check that the values $1$, $2$, $4$ and $7$ are smaller than their
left neighbor in $\sigma$ and values 1, 2, 4, and 9 are smaller than
their right neighbor. The $\312$ patterns are 52-3, 71-3, 52-4, 71-4,
and 71-6 so that the weight is indeed $(0,0,2,2,0,1,0,0,0)$.

Let us also describe the reverse algorithm building a permutation
from a weighted Dyck path $(D, w)$.

\begin{algorithm}
  \label{algoInverseFV}
  The permutation $\sigma$ is built iteratively by
  \begin{itemize}
  \item Initialization: $\sigma = \circ$;
  \item At the $k$-th step of the algorithm, replace the $(w_k+1)$-st $
    \circ$ of $\sigma$ by:
    \begin{itemize}
    \item $\circ k \circ$ if $D_{2k-1} = D_{2k} = /$,
    \item $k \circ$ if $D_{2k-1}D_{2k} = /\backslash$,
    \item $\circ k$ if $D_{2k-1}D_{2k} = \backslash/$,
    \item $ k$ if $D_{2k-1} = D_{2k} = \backslash$;
    \end{itemize}
  \item The final permutation is obtained by removing the last $\circ$.
  \end{itemize}
\end{algorithm}

For the weighted Dyck path of Figure~\ref{figHist}, the previous
algorithm gives:
$$
\begin{array}{c}
  \sigma = \circ ~\to~ \circ 1 \circ ~\to~ \circ 2 \circ 1 \circ ~\to~
  \circ 2 \circ 13 \circ ~\to~
  \circ2\circ13\circ4\circ\vspace{0.2cm}\\
  \to~  52\circ13\circ4\circ ~\to~ 52 \circ 1364 \circ
  ~\to~ 52 \circ 71364 \circ \vspace{0.2cm}\\~\to~ 52871364 \circ ~\to~
  528713649\circ ~\to~ 528713649.
\end{array}
$$

\section{Permutations to weighted Dyck paths}
\label{sectionPermToDyckPath}

Let us describe what happens to the triple of statistics on
permutations through the Fran\c con-Viennot bijection.

\subsection{The statistics through the Fran\c con-Viennot bijection}
\label{subSectFV}
\begin{definition}
  Let $(D,w)$ be a weighted Dyck path of size $n$.
  \begin{itemize}
  \item The \emph{total weight} of $(D,w)$ (denoted by $\tw(D,w)$) is
    the sum of the values of $w$.
  \item The \emph{descent set} of $(D,w)$ is
    \begin{equation}
      \Des(D,w) = \{i~|~w_i > w_{i+1}\} \cup \{ i~|~w_i = w_{i+1}, D_{2i}
      = / \}
    \end{equation}
    and the \emph{descent composition} $\CDes(D,w)$ is the composition
    of $n$ whose descent set is $\Des(D,w)$.
  \item The \emph{Genocchi descent set} of $(D,w)$ is
    \begin{equation}
      \GDes(D,w) = \{ i \in [2, n]~|~D_{2i-1} = \backslash\}
    \end{equation}
    then, as for permutations, the \emph{Genocchi composition of
      descent} of $(D,w)$ (denoted by $\GC(D,w)$) is the composition of $n$
    whose descent set is $\{d-1~|~d \in \GDes(D,w)\}$.
  \end{itemize}
\end{definition}

On the weighted Dyck path $(D,w)$ of Figure~\ref{figHist} we have
$\tw(D,w) = 5$. The positions $i$ such that $D_{2i-1} = \backslash$
are $\GDes(D,w) = \{5, 6, 7, 8\}$ so that $\GC(D,w) = (4,1,1,1,2)$ and
one can check that $\CDes(D,w) = (1,3,2,1,2)$.

\begin{proposition}
  \label{lemmaStatsFV}
  Let $\sigma$ be a permutation of size $n$ and $(D,w) =
  \psi_{FV}(\sigma)$. We have the following properties:
  \begin{itemize}
  \item $\tw(D,w) = \tto(\sigma)$;
  \item $\GC(D,w) = \GC(\sigma)$;
  \item $\CDes(D,w) = \Rec(\sigma)$.
  \end{itemize}
\end{proposition}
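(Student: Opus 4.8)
The three assertions concern three independent features of the weighted Dyck path $(D,w)=\psi_{FV}(\sigma)$, so I would prove them one at a time, in each case unwinding the defining Algorithm~\ref{algoFV} that produces $(D,w)$ from $\sigma$.

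\emph{The total weight.} This is essentially immediate from the construction. By Algorithm~\ref{algoFV}, $w_k=\tto_k(\sigma)$, the number of $\312$ patterns in which the value $k$ plays the role of the~$2$. Every $\312$ pattern has a unique value playing the role of the~$2$, so summing over all $k\in\{1,\dots,n\}$ counts every $\312$ pattern exactly once:
\begin{equation*}
  \tw(D,w)=\sum_{k=1}^{n}w_k=\sum_{k=1}^{n}\tto_k(\sigma)=\tto(\sigma).
\end{equation*}

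\emph{The Genocchi composition.} Here I would show $\GDes(D,w)=\GDes(\sigma)$, from which equality of the G-compositions follows since both are defined as the composition whose descent set is $\{d-1\mid d\in\GDes\}$. Fix $k\in[2,n]$ and let $j=\sigma^{-1}(k)$, so $k=\sigma_j$. By Algorithm~\ref{algoFV}, $D_{2k-1}=\backslash$ if and only if $\sigma_j\ge\sigma_{j+1}$, i.e.\ (since values are distinct and $\sigma_{n+1}=n+1>k$ forces $j\le n$ to still allow this) if and only if $\sigma_j>\sigma_{j+1}$, which is exactly the statement that $k=\sigma_j$ is a descent top of $\sigma$, i.e.\ $k\in\GDes(\sigma)$. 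Hence $\GDes(D,w)=\{k\in[2,n]\mid D_{2k-1}=\backslash\}=\GDes(\sigma)$, and the restriction to $[2,n]$ is harmless because $\sigma_1=\sigma_j$ with $j=1$ cannot be a descent top relative to $\sigma_0=0$... more precisely one checks the value $1$ is never a descent top and the value $n$, if it is $\sigma_j$, has $D_{2n-1}=\backslash$ iff it is followed by something smaller, consistent with the $[2,n]$ convention; I would state this boundary check explicitly.

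\emph{The descent composition.} This is the subtle one and I expect it to be the main obstacle. I need $\Des(D,w)=\Rec(\sigma)$, i.e.\ for each $i\in\{1,\dots,n-1\}$, the value $i+1$ lies to the left of the value $i$ in $\sigma$ if and only if either $w_i>w_{i+1}$, or $w_i=w_{i+1}$ and $D_{2i}=/$. Write $p=\sigma^{-1}(i)$ and $q=\sigma^{-1}(i+1)$. The condition "$i+1$ is left of $i$" means $q<p$. The plan is to compare $\tto_i(\sigma)$ with $\tto_{i+1}(\sigma)$: a $\312$ pattern with the value $i$ as its~$2$ is a pair with $\sigma_{a+1}<i<\sigma_a$ for some $a$ with the relevant index $>a+1$; one wants to set up an almost-bijection between such patterns for $i$ and those for $i+1$, the discrepancy being governed precisely by the relative position of $p$ and $q$ and by whether $D_{2i}=/$ (which by Algorithm~\ref{algoFV} records whether $\sigma_{p-1}<i$, i.e.\ whether the left neighbor of $i$ in $\sigma$ is smaller than $i$). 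The key case analysis is: patterns $\sigma_{a+1}<i<\sigma_a$ versus $\sigma_{a+1}<i+1<\sigma_a$ differ only when $i+1$ itself sits in one of the two roles — either $i+1=\sigma_a$ (so $i$ counts this but $i+1$ cannot, since $i+1$ can't be its own~$2$) or $i+1=\sigma_{a+1}$ (forcing $a+1=q$, and then the pair $(q-1,\cdot)$ is a $\312$ with $i+1$ as the~$1$, not the~$2$) — together with the boundary effect of the convention $\sigma_{n+1}=n+1$. Carefully tracking these finitely many exceptional positions, one finds $w_i-w_{i+1}=\tto_i(\sigma)-\tto_{i+1}(\sigma)$ equals a small integer whose sign, together with the tie-breaking value of $D_{2i}$, is positive exactly when $q<p$. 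I would organize this as a short lemma computing $\tto_i-\tto_{i+1}$ explicitly in terms of the position of $i+1$ relative to $i$ and the sign of $\sigma_{p-1}-i$, and then read off the descent condition. As a sanity check throughout I would verify everything against the running example $\tau=528713649$, where $w=(0,0,2,2,0,1,0,0,0)$, $\Des(D,w)=\{1,4,6,7\}$, and $\Rec(\tau)=(1,3,2,1,2)$ does have descent set $\{1,4,6,7\}$.
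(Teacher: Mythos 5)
Your handling of the first two items coincides with the paper's (which simply declares them immediate from the construction of $\psi_{FV}$), but for the third item you take a genuinely different route. The paper never compares $\tto_i(\sigma)$ and $\tto_{i+1}(\sigma)$ on the permutation side: it switches to the inverse map (Algorithm~\ref{algoInverseFV}), which inserts the values $1,2,\dots,n$ in increasing order, placing $k$ at the $(w_k+1)$-st free slot $\circ$. After $k$ is placed, the number of slots strictly to its left is $w_k$ if $D_{2k}=\backslash$ and $w_k+1$ if $D_{2k}=/$, and since $k+1$ is then placed at the $(w_{k+1}+1)$-st slot of the updated word, one reads off at once that $k+1$ lands to the left of $k$ (i.e.\ $k$ is a recoil) exactly when $w_k>w_{k+1}$, or $w_k=w_{k+1}$ and $D_{2k}=/$; later insertions cannot change the relative order of $k$ and $k+1$, so the whole point is a two-line argument. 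Your forward computation can be made to work: writing $p=\sigma^{-1}(i)$ and $q=\sigma^{-1}(i+1)$ and comparing the sets $\{a\le p-2 : \sigma_{a+1}<i<\sigma_a\}$ and $\{a\le q-2 : \sigma_{a+1}<i+1<\sigma_a\}$, one checks for $q<p$ that the second is contained in the first, and that the difference is empty precisely when every value in positions $q+1,\dots,p-1$ exceeds $i$, which forces $\sigma_{p-1}>i$, i.e.\ $D_{2i}=/$; the case $q>p$ is handled symmetrically. But two details of your sketch need correcting before this becomes a proof: the two straddling conditions differ exactly when $\sigma_a=i+1$ (i.e.\ $a=q$) or $\sigma_{a+1}=i$ (i.e.\ $a=p-1$) --- not when $\sigma_{a+1}=i+1$, which satisfies neither condition --- and the truncation of the admissible range of $a$ at $p-2$ versus $q-2$, which you mention only in passing, is in fact where most of the discrepancy and the entire role of the tie-breaker $D_{2i}$ come from. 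So your plan is viable but requires the full case analysis to be written out; the paper's inverse-insertion argument buys all of this essentially for free, at the cost of having to trust Algorithm~\ref{algoInverseFV} as the inverse of $\psi_{FV}$.
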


\begin{proof}
 The first two assertions come directly from our description of
 $\psi_{FV}$. To prove the last one, it is easier to work with 
 $\psi_{FV}^{-1}$ that builds a permutation $\sigma$ from a weighted
 Dyck path $(D, w)$, as described in Algorithm~\ref{algoInverseFV}.  Let $k \in
 \{1, \ldots, n-1\}$. If $w_k < w_{k+1}$, then $k$ is on the left of
 $k+1$ in $\sigma$ so it is not a recoil for $\sigma$. Using the same
 idea, if $w_k > w_{k+1}$ then $k$ is a recoil for $\sigma$. Now, when
 $w_k = w_{k+1}$, we have a recoil in $k$ if and only if there is a
 new $\circ$ on the left of $k$ when we place it in $\sigma$ which
 happens if and only if $D_{2k} = /$.
\end{proof}

We now have a representation of the triple of statistics on the
weighted Dyck paths but the definition of $\GC$ on those objects is
not very natural. Indeed, the fact that $i$ belongs to $\GDes(D, w)$ or
not depends on the $(i+1)$-st group of steps. To simplify
the second part of the construction, we
transform the Dyck paths to obtain a more suitable statistic.

\subsection{An involution on Dyck paths}
\label{subSectInvol}

\begin{definition}
  \label{defInvol}
  Let $D$ be a Dyck path of size $n$. The Dyck path $\psi(D)$ is
  obtained by sending every pair of steps $/\backslash$ or
  $\backslash/$ at positions $2i$, $2i+1$ to one another.
\end{definition}

We extend this definition to weighted Dyck paths by carrying the
weight.  Note that it is an involution. An example is given in
Figure~\ref{figPsi} where we apply $\psi$ to the weighted Dyck path of
Figure~\ref{figHist}.

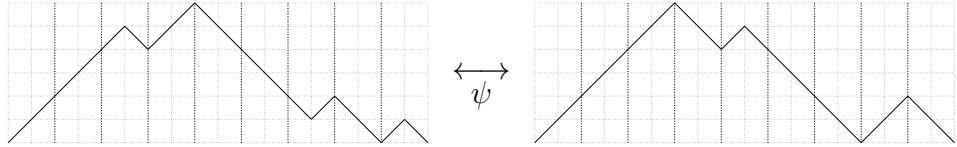
\begin{figure}[h!t]
  \begin{center}
    \begin{tikzpicture}
      \node (F1) at (0,0) {
        \scalebox{0.31}{
          \begin{tikzpicture}
  \draw[dotted, thick, color=gray!60] (0, 0) grid (18, 6);
  \draw[rounded corners=1, color=black, line width=1] (0, 0) -- (1, 1)
  -- (2, 2) -- (3, 3) -- (4, 4) -- (5, 5) -- (6, 4) -- (7, 5) -- (8,
  6) -- (9, 5) -- (10, 4) -- (11, 3) -- (12, 2) -- (13, 1) -- (14, 2)
  -- (15, 1) -- (16, 0) -- (17, 1) -- (18, 0);

  \draw[dotted, very thick] (2, 0) -- (2, 6); \draw[dotted, very
    thick] (4, 0) -- (4, 6); \draw[dotted, very thick] (6, 0) -- (6,
  6); \draw[dotted, very thick] (8, 0) -- (8, 6); \draw[dotted, very
    thick] (10, 0) -- (10, 6); \draw[dotted, very thick] (12, 0) --
  (12, 6); \draw[dotted, very thick] (14, 0) -- (14, 6); \draw[dotted,
    very thick] (16, 0) -- (16, 6);
\end{tikzpicture}
         }
      };

      \node (arrow) at (3.5,0) {$\longleftrightarrow$};
      \node (phi) at (3.5,-0.3) {$\psi$};

      \node (F1) at (7,0) {
        \scalebox{0.31}{
          \begin{tikzpicture}
  \draw[dotted, thick, color=gray!60] (0, 0) grid (18, 6);
  \draw[rounded corners=1, color=black, line width=1] (0, 0) -- (1, 1)
  -- (2, 2) -- (3, 3) -- (4, 4) -- (5, 5) -- (6, 6) -- (7, 5) -- (8,
  4) -- (9, 5) -- (10, 4) -- (11, 3) -- (12, 2) -- (13, 1) -- (14, 0)
  -- (15, 1) -- (16, 2) -- (17, 1) -- (18, 0);

  \draw[dotted, very thick] (2, 0) -- (2, 6); \draw[dotted, very
    thick] (4, 0) -- (4, 6); \draw[dotted, very thick] (6, 0) -- (6,
  6); \draw[dotted, very thick] (8, 0) -- (8, 6); \draw[dotted, very
    thick] (10, 0) -- (10, 6); \draw[dotted, very thick] (12, 0) --
  (12, 6); \draw[dotted, very thick] (14, 0) -- (14, 6); \draw[dotted,
    very thick] (16, 0) -- (16, 6);
\end{tikzpicture}
         }
      };
\end{tikzpicture}
\caption{An example of $\psi$.}
\label{figPsi}
\end{center}
\end{figure}

Definition~\ref{defInvol} is a visual way of defining $\psi$, a more
formal way would be by using the following lemma.

\begin{lemma}
  \label{otherDefPsi}
  Let $D$ be a Dyck path, we have:
  \begin{equation}
    \label{reformPsi}
    \begin{array}{rcl}
      D_{2i} & = & \psi(D)_{2i+1}; \\
      D_{2i+1} & = & \psi(D)_{2i}.
    \end{array}
  \end{equation}
\end{lemma}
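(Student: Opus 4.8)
The plan is to unwind Definition~\ref{defInvol} directly; there is nothing deep here, only a small amount of bookkeeping. First I would record that, as $i$ runs over $\{1,\dots,n-1\}$, the two-element sets $\{2i,2i+1\}$ are pairwise disjoint and their union is $\{2,3,\dots,2n-1\}$, so the operation defining $\psi$ acts independently on each such block of two consecutive steps (all blocks being modified simultaneously, as read off from $D$, so consecutive blocks never interfere), and the first and last steps $D_1=/$, $D_{2n}=\backslash$ are left untouched. In particular the identities \eqref{reformPsi} are only claimed for $i$ in this range, which is exactly where they make sense.

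Next I would fix $i\in\{1,\dots,n-1\}$ and distinguish two cases according to the block $(D_{2i},D_{2i+1})$. If $D_{2i}\neq D_{2i+1}$, this block is either $/\backslash$ or $\backslash/$, and by Definition~\ref{defInvol} it is sent to the other of the two, that is, to $(D_{2i+1},D_{2i})$; hence $\psi(D)_{2i}=D_{2i+1}$ and $\psi(D)_{2i+1}=D_{2i}$, which is~\eqref{reformPsi}. If $D_{2i}=D_{2i+1}$, the block is $//$ or $\backslash\backslash$, Definition~\ref{defInvol} leaves it alone, so $\psi(D)_{2i}=D_{2i}$ and $\psi(D)_{2i+1}=D_{2i+1}$; but as the two steps coincide this is the same pair of equations as~\eqref{reformPsi}. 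Carrying the weight unchanged extends everything to weighted Dyck paths with no further work.

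The only point deserving a moment's attention is precisely this case split: one must notice that ``swap the two steps'' is the correct description even on the blocks $//$ and $\backslash\backslash$ that $\psi$ does not actually alter, because swapping two equal steps is the identity. I do not expect any genuine obstacle. As a consistency check one can read \eqref{reformPsi} twice to get $\psi(\psi(D))_{2i}=\psi(D)_{2i+1}=D_{2i}$ and likewise at $2i+1$, recovering the already-noted fact that $\psi$ is an involution; and the well-definedness of $\psi(D)$ as a Dyck path is part of Definition~\ref{defInvol} and may be taken for granted here.
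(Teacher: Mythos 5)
Your proof is correct and follows essentially the same route as the paper's: the same case split on whether $D_{2i}=D_{2i+1}$, with the observation that the swap is trivially the identity on equal steps and genuinely exchanges the two steps otherwise. The extra bookkeeping about disjoint blocks and the untouched endpoints is fine but not needed beyond what the paper already records.
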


\begin{proof}
  If $D_{2i} = D_{2i+1}$ then $\psi$ does not change those step so the
  result is clear. Otherwise $D_{2i} \ne D_{2i+1}$ and then $\psi$
  sends one to the other which ends the proof.
\end{proof}

This involution corresponds to the second part of
Diagram~\eqref{eqSchemaBijs}.

\begin{equation*}
  \textcolor{light-gray}{\text{P} \overset{\psi_{FV}}{\longleftrightarrow}}
  \text{WDP} \overset{\psi}{\longleftrightarrow} \text{WDP}
  \textcolor{light-gray}{\overset{\phi_{2}}{\longleftrightarrow} \text{DWSF}
    \overset{\phi_{1}}{\longleftrightarrow} \text{SF}}
\end{equation*}

This bijection gives a new bijection between Laguerre histories and
large Laguerre histories sending the equivalent of the $\GC$
statistic to a very similar one. Unfortunately this bijection is not
intuitive on Laguerre objects.

Let us introduce the new statistics on weighted Dyck paths
corresponding to $\CDes$ and $\GC$ after applying $\psi$.

\begin{definition}
  The statistics $\GC^0$ and $\CDes^0$ are defined as follows.
  \begin{itemize}
  \item The \emph{Genocchi descent set of type 0} of a weighted Dyck
    path of size $n$ is
    \begin{equation}
      \GDes^0(D,w) = \{ i \in [1, n-1]~|~D_{2i} = \backslash\}
    \end{equation}
    and the \emph{Genocchi composition of descent of type 0} (denoted by
    $\GC^0(D, w)$) is the
    composition of $n$ whose descent set is $\GDes^0(D,w)$.
  \item The \emph{descent set of type 0} of $(D,w)$ is
    \begin{equation}
      \Des^0(D,w) = \{i~|~w_i > w_{i+1}\} \cup \{ i~|~w_i = w_{i+1},
      D_{2i+1} = / \}
    \end{equation}
    and the \emph{descent composition of type 0} (denoted by $\CDes^0(D,w)$)
    is the composition of $n$ whose descent set is $\Des^0(D,w)$.
  \end{itemize}
\end{definition}

\begin{proposition}
\label{lemmaStatsInvol}
 Let $(D,w)$ be a weighted Dyck path, then
 \begin{itemize}
  \item $\tw(\psi(D,w)) = \tw(D,w)$;
  \item $\GC^0(\psi(D,w))= \GC(D,w)$;
  \item $\CDes^0(\psi(D,w)) = \CDes(D,w)$.
 \end{itemize}
\end{proposition}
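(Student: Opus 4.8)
The plan is to verify the three equalities one at a time, using the visual description of $\psi$ together with the reformulation in Lemma~\ref{otherDefPsi}. The first equality, $\tw(\psi(D,w)) = \tw(D,w)$, is immediate: by definition $\psi$ carries the weight along unchanged, so the multiset of weight values — and in particular their sum — is unaffected.

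For the second equality I would compare the two Genocchi descent sets directly. By definition, $i \in \GDes(D,w)$ precisely when $D_{2i-1} = \backslash$, while $i \in \GDes^0(\psi(D,w))$ precisely when $\psi(D)_{2i} = \backslash$. Now apply Lemma~\ref{otherDefPsi} with index shifted by one: it gives $\psi(D)_{2i} = D_{2i-1}$. Hence $i \in \GDes^0(\psi(D,w)) \iff D_{2i-1} = \backslash \iff i \in \GDes(D,w)$, so the two sets coincide on the relevant range of indices. Since $\GC^0$ of a path is the composition with descent set exactly $\GDes^0$, while $\GC$ is the composition whose descent set is $\{d-1 \mid d \in \GDes\}$, I should be a little careful about the indexing conventions: one checks that the shift built into the definition of $\GC$ on $(D,w)$ is exactly compensated by the fact that $\GDes^0$ is indexed over $[1,n-1]$ rather than $[2,n]$, so that both sides produce the same composition of $n$. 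This bookkeeping is the one place where a sign/shift slip is easy, but it is routine.

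For the third equality I would again argue at the level of descent sets. Since the weight is unchanged by $\psi$, the ``$w_i > w_{i+1}$'' part of $\Des^0(\psi(D,w))$ agrees with the corresponding part of $\Des(D,w)$ verbatim. It remains to treat the ties: we must show that for $i$ with $w_i = w_{i+1}$, we have $\psi(D)_{2i+1} = /$ if and only if $D_{2i} = /$. But Lemma~\ref{otherDefPsi} says exactly $\psi(D)_{2i+1} = D_{2i}$, which settles it. Therefore $\Des^0(\psi(D,w)) = \Des(D,w)$, and since both $\CDes^0(\psi(D,w))$ and $\CDes(D,w)$ are the compositions of $n$ associated with these equal sets, they are equal.

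I do not anticipate a serious obstacle here: all three statements reduce, via Lemma~\ref{otherDefPsi} and the fact that $\psi$ preserves weights, to the tautology that $\psi$ swaps the step in position $2i$ with the step in position $2i+1$. The only genuine care needed is in matching the index ranges and the ``$d-1$'' shift in the definitions of $\GC$ and $\GC^0$ so that the identity of descent sets really does translate into the identity of the associated compositions; I would spell that out explicitly for the Genocchi case and note that the descent-composition case is immediate once the descent sets are shown equal.
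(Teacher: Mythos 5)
Your first and third bullets are fine and follow the paper's own (very terse) proof: the weight is untouched by $\psi$, and for the descent set the only thing to check at ties $w_i=w_{i+1}$ is $\psi(D)_{2i+1}=D_{2i}$, which is literally the first line of Lemma~\ref{otherDefPsi}. The problem is in the Genocchi case, and it is exactly the ``sign/shift slip'' you flagged as routine. Lemma~\ref{otherDefPsi} gives $\psi(D)_{2i}=D_{2i+1}$, \emph{not} $D_{2i-1}$: shifting the index in $D_{2i}=\psi(D)_{2i+1}$ by one yields $D_{2i-2}=\psi(D)_{2i-1}$, which is not the identity you invoke. Consequently your intermediate claim $\GDes^0(\psi(D,w))=\GDes(D,w)$ is false, and if it were true the proposition would actually fail: $\GC^0$ uses its Genocchi descent set verbatim while $\GC$ uses $\{d-1\mid d\in\GDes\}$, so equal Genocchi descent sets would force the two compositions to differ by a shift whenever the set is nonempty. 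Your closing appeal to the shift being ``exactly compensated'' cannot be verified from what you wrote because, under your stated premise, it is not.

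The correct bookkeeping is: the descent set of $\GC(D,w)$ is $\{d-1\mid d\in[2,n],\ D_{2d-1}=\backslash\}=\{i\in[1,n-1]\mid D_{2i+1}=\backslash\}$, while the descent set of $\GC^0(\psi(D,w))$ is $\{i\in[1,n-1]\mid \psi(D)_{2i}=\backslash\}$; these coincide precisely because $\psi(D)_{2i}=D_{2i+1}$. In other words, the $d-1$ shift built into $\GC$ is compensated by the $+1$ shift in Lemma~\ref{otherDefPsi}, not by the index range of $\GDes^0$, and the two Genocchi descent \emph{sets} are shifted copies of one another rather than equal. The fix is one line, but as written the second bullet is not proved.
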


\begin{proof}
  As $\psi$ does not change the weight of the Dyck path, the total
  weight is carried.
  
  For the two other points, the statistics are really close but just
  differ on the index of the considered steps, so
  Lemma~\ref{otherDefPsi} ends the proof.
\end{proof}

\section{Subexcedent functions to weighted Dyck paths}
\label{sectionSFToDyckPath}
The aim of this section is to build a bijection from \SF s to weighted
Dyck paths sending the triple $(\LC, \CDes,
\inv-\maj(\overline{\LC}))$ to $(\GC^0, \CDes^0, \tw)$ where the
statistic $\inv-\maj(\overline{\LC})$ corresponds to the statistic
$\alpha$ in the introduction. To do this, we build a bijection
from \SF s to an intermediate object: \emph{decreasing weighted \SF s}
which are represented by a Catalan object and a weight, and then
build a Catalan bijection between decreasing \SF s and Dyck paths.

\subsection{Subexcedent functions to decreasing weighted \SF s}
\label{subSectPhi1}

\begin{definition}
  \label{defDecreasingLehmer}
  Let us define a \emph{decreasing {\SF}} of size $n$ and a
  \emph{weight} for it.
  \begin{itemize}
  \item A {\SF} $u$ of size $n$ is \emph{decreasing} if the word
    obtained by removing all its zeroes is a strictly decreasing word.
  \item A \emph{weight} associated with a decreasing {\SF} is a word $w$ of size
    $n$ such that for all $k \in \{1, \ldots, n\}$, $w_k$ is smaller
    than or equal to the number of $i < k$ such that $0 < u_i \leq n-k$
    (\ie, the number of positive values on the left of $k$ that could
    be at position $k$).
  \end{itemize}
\end{definition}

For example, the {\SF} $u = 540300200$ is decreasing. As for the
associated weight, the maximum weight $W$ is the weight for which each
value is maximal and equal to the number of positive values to its
left smaller than $n$ minus its position. The maximum weight
of~$u$ is $012221000$ so the weight $002201000$ is correct since it is
smaller than the maximum weight component-wise, whereas $000002000$ is not.

The decreasing \SF s are indeed Catalan objects since one can build a
bijection with nondecreasing parking functions, the nondecreasing
words whose $i$-th value is a positive integer smaller or equal to $i$.
\begin{algorithm}
  Let $u$ be a decreasing \SF.
  \begin{itemize}
  \item Let $v$ be the mirror image of u,
  \item replace each 0 of $v$ by the first nonzero value to its left (if
    such a value exists),
  \item add one to each value of $v$.
  \end{itemize}
\end{algorithm}
For example, if $u=540300200$, the mirror image is $002003045$ and the
associated nondecreasing parking function is $113334456$.

\subsubsection{Description of the bijection $\phi_1$ between \SF s and decreasing weighted \SF s}

This new bijection $\phi_1$ can be described as an algorithm that
sorts a {\SF} by successively moving the greatest value to its left.

\begin{algorithm}
  \label{algoPhi1}
  Let $u$ be a {\SF} of size $n$. Set the weight $w$ to $0^n$.
  \begin{itemize}
  \item Step 1: define the \emph{pivot} as the greatest value in $u$
    such that one of its occurrences has smaller or equal nonzero
    values to its left. If the pivot is not defined, the algorithm
    stops. Otherwise, let $k$ be the position of the rightmost
    occurrence of the pivot in $u$.
  \item Step 2: among the values smaller than or equal to the pivot on
    its left, let $i$ be the position of the rightmost occurrence of
    the largest one. Modify the {\SF} by decrementing $u_i$ by $1$
    and then swapping $u_i$ with $u_k$. Modify the weight by
    incrementing $w_k$. Go back to Step 1.
  \end{itemize}
\end{algorithm}
Then $\phi_1(u)$ is the resulting pair $(u,w)$ of the algorithm.

Let us give an example with $u = 315503200$. Our algorithm follows the
steps:
\begin{enumerate}[label=\arabic*)]
\item $u = 315503200$, $w = 000000000$, then $pivot = 5$, $k = 4$ and
  $i = 3$;
\item $u = 31{\bf \textcolor{red}{54}}03200$, $w = 000{\bf
  \textcolor{red}{1}}00000$, then $pivot = 5$, $k = 3$ and $i = 1$;
\item $u = {\bf \textcolor{red}{5}}1{\bf \textcolor{red}{2}}403200$,
  $w = 00{\bf \textcolor{red}{1}}100000$, then $pivot = 4$, $k = 4$
  and $i = 3$;
\item $u = 51{\bf \textcolor{red}{41}}03200$, $w = 001{\bf
  \textcolor{red}{2}}00000$, then $pivot = 4$, $k = 3$ and $i = 2$;
\item $u = 5{\bf \textcolor{red}{40}}103200$, $w = 00{\bf
  \textcolor{red}{2}}200000$, then $pivot = 3$, $k = 6$ and $i = 4$;
\item $u = 540{\bf \textcolor{red}{3}}0{\bf \textcolor{red}{0}}200$,
  $w = 00220{\bf \textcolor{red}{1}}000$ and the algorithm stops.
\end{enumerate}
At the end, $\phi_1(315503200) = (540300200, 002201000)$.

This map corresponds to the last part of
Diagram~\eqref{eqSchemaBijs}.

\begin{equation*}
  \textcolor{light-gray}{\text{P} \overset{\psi_{FV}}{\longleftrightarrow}
  \text{WDP} \overset{\psi}{\longleftrightarrow} \text{WDP}
  \overset{\phi_{2}}{\longleftrightarrow}} \text{DWSF}
  \overset{\phi_{1}}{\longleftrightarrow} \text{SF}
\end{equation*}

\begin{proposition}
 The map $\phi_1$ is a well-defined function from \SF s to decreasing
 weighted \SF s.
\end{proposition}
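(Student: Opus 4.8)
The statement asks for three things: that every iteration of Algorithm~\ref{algoPhi1} is unambiguous, that the process terminates, and that the word it returns is a decreasing \SF\ (and, should one read the statement as taking values in decreasing weighted \SF s, that the accumulated weight is admissible in the sense of Definition~\ref{defDecreasingLehmer}). Unambiguity is immediate from the phrasing: ``the greatest value'', ``the rightmost occurrence'' and ``the largest one'' all designate unique values or positions. So the real content is termination together with two structural invariants, and that is the order in which I would proceed.

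First I would check that an iteration is legal on a \SF\ and preserves the \SF\ property. When a pivot~$p$ exists, its defining property supplies a nonzero entry $\le p$ lying strictly to the left of the rightmost occurrence~$k$ of~$p$; hence the set of entries $\le p$ to the left of~$k$ is nonempty, and its maximum~$a$, attained at the position~$i<k$ selected in Step~2, satisfies $1\le a\le p$. Thus Step~2 is executable and leaves every entry nonnegative. For the \SF\ bound, only positions~$i$ and~$k$ are modified, to $u_i=p$ and $u_k=a-1$ respectively. From the bound $u_k\le n-k$ for the previous word and $i<k$ we get $p=u_k\le n-k<n-i$, so the new~$u_i$ is admissible; and $a\le p\le n-k$ gives $u_k=a-1<n-k$, so the new~$u_k$ is admissible as well. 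Since no other position changes, induction shows every word produced lies in $\SFE_n$.

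Termination is witnessed by the monovariant $\sum_j u_j$ (the number of inversions of the word): the swap in Step~2 leaves it fixed and the decrement lowers it by exactly one, so it strictly decreases at each iteration, and being a nonnegative integer it can do so only finitely often. It then remains to recognise the word at which the loop halts, namely the one admitting no pivot. Unwinding the definition, this happens exactly when, for every position~$j$ with $u_j>0$, every nonzero entry to the left of~$j$ is strictly greater than~$u_j$; equivalently, the subword of nonzero entries of~$u$, read from left to right, is strictly decreasing. By Definition~\ref{defDecreasingLehmer} this is precisely the condition for $u$ to be decreasing, so $\phi_1$ is a well-defined map into decreasing \SF s.

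The only part that requires more than the bookkeeping above — and hence the step I would expect to be the main obstacle — is the admissibility of the weight~$w$ with respect to the \emph{final}~$u$, since a position~$k$ may be selected in several iterations while the entries to its left keep changing. I would handle it by threading an invariant through the loop: each unit contributing to~$w_k$ is witnessed by a distinct position $j<k$ whose current entry is positive and $<n-k$, so that $w_k$ can never exceed the number of such positions. Checking that the move in Step~2 maintains this invariant is the crux; everything else is the routine verification carried out above.
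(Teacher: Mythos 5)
Your first three paragraphs are correct and in places more careful than the paper on the points they cover: the paper dismisses unambiguity and termination in a single parenthetical remark, and justifies decreasingness only by saying that the algorithm sorts the {\SF}, whereas you verify that each exchange preserves the subexcedence bounds $u_j\le n-j$, exhibit the monovariant $\sum_j u_j$ (which drops by exactly one per iteration), and, most usefully, characterize the terminal words exactly: no pivot exists if and only if the subword of nonzero entries is strictly decreasing. That last observation is the cleanest way to see that the output is a decreasing {\SF}.

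The gap is precisely the part you defer: the admissibility of the weight, which is needed for the image to be a decreasing \emph{weighted} {\SF} --- the object $\phi_1$ actually produces and the one the paper's proof addresses. You propose a loop invariant (each unit of $w_k$ witnessed by a distinct position $j<k$ with $0<u_j<n-k$) but do not verify it, and as stated it is fragile: the entry at a witnessing position is itself moved or decremented when a later exchange visits that position, so the witness assignment would have to be updated dynamically and its distinctness re-argued. The paper closes this instead by a global count rather than a step-by-step invariant: $w_k$ is incremented only when the pivot occupies position $k$, and since a pivot moves strictly leftwards it passes through $k$ at most once per pivot run; each such run terminates at a distinct position strictly to the left of $k$, leaving there a nonzero entry of the \emph{final} word, and that entry, having once occupied position $k$, is at most $n-k$. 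Hence $w_k$ is bounded by the number of positive entries at most $n-k$ to the left of $k$ in $\phi_1(u)$, which is the required condition. (Incidentally, the maximal weight $012221000$ computed for $540300200$ shows that the bound in Definition~\ref{defDecreasingLehmer} is meant to be $0<u_i\le n-k$ rather than the strict inequality you copied; with the strict bound even the paper's own example fails.) Supplying this counting argument, or a corrected dynamic witness, is what remains to complete your proof.
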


\begin{proof}
 It is clear that $\phi_1$ is well-defined for every {\SF}~$u$ and
 that the algorithm stops (we have at most $n-1-pivot$
 swaps for each pivot).
 
 The result is decreasing in the sense of
 Definition~\ref{defDecreasingLehmer} because the algorithm sorts the
 \SF.
 
 To prove that the result is a decreasing weighted \SF, we need to
 prove that the weight satisfies the constraints of
 Definition~\ref{defDecreasingLehmer}. The value of the weight at
 position $j$ was increased at most once per pivot that ended on its
 left, so it is smaller than the number of non-zero values
 on the left of position $j$ at the end of the algorithm. Moreover, it was
 increased only if the pivot was at position $j$, which is possible
 only if the pivot is smaller than or equal to $n-j$. So $w$
 satisfies the constraints of the definition.
\end{proof}

\begin{lemma}
  \label{lemmaBij2And2}
  Consider an execution of Step 2 of
  Algorithm~\ref{algoPhi1}. Immediately after that we can recover both
  positions $i$ and $k$. 
  
  Let $j$ be the rightmost position such that $w_j \ne 0$. The
  position $i$ of the pivot is the rightmost position on the left of
  $j$ such that $u_i > u_j$.
  
  The previous position $k$ of the pivot is the nearest position to
  the right of $i$ such that the associated weight is nonzero, \ie,
  $k$ is the smallest $k > i$ such that $w_k \ne 0$.
\end{lemma}

\begin{proof}
  A value $u_i$ is \emph{weighted} if the corresponding weight is
  nonzero, \ie, $w_i > 0$. 
  
  We start by proving the second part of the lemma. Consider a
  step $S$ where a pivot $p$ in position $k$ should exchange with
  the value in position $i$. Then, for all $j$ such that $i <j <k$, we 
  have $w_j=0$. Indeed, assume there is a $j$ such that $i<j<k$ and
  $w_j \ne 0$ and let $S'$ be the step corresponding to the previous exchange
  concerning $u_j$ and a pivot $p'$. As $u_j$ is to the right of 
  $u_i$ in step $S$, we must have $p'$ to the right of both $u_i$
  and $u_j$ in step $S'$. Moreover, as $p'$ exchange with $u_j$,
  we have either $u_j > u_i$ and after the exchange and the
  decrementation this inequality becomes $u_j \geq u_i$, or
  $u_j=u_i$ which implies that $u_j$ is to the right of $u_i$ since the
  exchange implies $u_j$, and that $p'$ exchanges with $u_i$ next. In
  both cases we end up with $u_j \geq u_i$ and as there are no
  exchange implying $u_j$ between $S'$ and $S$, only $u_i$ may be
  decremented so we also have $u_j \geq u_i$ in situation $S$. But
  the exchange in situation $S$ is between $p$ and $u_i$ with $u_j$ in
  the middle which implies $u_j < u_i$ and therefore contradicts
  $u_j\geq u_i$.

  Let us now prove the other part of the lemma. With the notations of
  the lemma, we necessarily have $u_i>u_j$ so we have to prove that for
  any $s$ such that $i<s<j$, we have $u_s\leq u_j$. Note that every
  new pivot starts at the same position than the previous one or to
  its right. Moreover, as we proved above that there are no weighted
  values between both exchanged positions, the pivot exchanged with
  every weighted values to its right up to $u_j$ and $u_j$ was the
  first one. This implies that $u_j$ is greater than all the values
  between the pivot $u_i$ and itself.
\end{proof}

The previous lemma shows that the map $\phi_1$ is injective by proving
that we can find the previous pivot and the value with which it
is swapped at each step. We prove in next section that decreasing
weighted \SF s are enumerated by $n!$ which proves the following
proposition:

\begin{proposition}
  \label{propPhi1Bij}
  The map $\phi_1$ is a bijection.
\end{proposition}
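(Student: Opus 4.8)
The plan is to deduce the statement from two facts: that $\phi_1$ is injective, which is essentially the content of Lemma~\ref{lemmaBij2And2}, and that the source and target sets have the same finite cardinality $n!$. For injectivity, I would observe that Algorithm~\ref{algoPhi1} can be run backwards. Suppose $(v,w) = \phi_1(u)$ with $w \neq 0^n$. By Lemma~\ref{lemmaBij2And2}, the positions $i$ and $k$ involved in the \emph{last} exchange performed by the algorithm are determined purely by $(v,w)$: namely $i$ is the rightmost position admitting a strictly smaller weighted value to its right (i.e.\ the rightmost $i$ with some $k>i$ satisfying $w_k>0$ and $v_i>v_k$), and $k$ is the smallest weighted position to the right of $i$. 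Undoing that exchange means setting the entry at position $i$ to $v_k+1$, setting the entry at position $k$ to $v_i$, and decrementing $w_k$; this recovers the $\SF$–weight pair one step earlier in the run.

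Since each forward step of Algorithm~\ref{algoPhi1} increments exactly one $w_k$ by $1$, the quantity $\sum_j w_j$ equals the number of exchanges performed, and each reversal step decreases it by $1$. Hence, iterating the reversal exactly $\sum_j w_j$ times terminates at the state $w = 0^n$, where (by the initialization of the algorithm) the $\SF$ coincides with the original input $u$. Thus $\phi_1(u)$ determines $u$, so $\phi_1$ is injective. (At each stage the reversal is applicable because such a $(v,w)$ is, by construction, the output of a nonempty run of Algorithm~\ref{algoPhi1}, whose last exchange supplies a pair $(i,k)$ of the form required by Lemma~\ref{lemmaBij2And2}.) Finally, $|\SFE_n| = n!$ since $\SF$s of size $n$ are in bijection with permutations via the Lehmer code; granting that decreasing weighted $\SF$s of size $n$ are also enumerated by $n!$, an injection between two finite sets of the same size $n!$ is a bijection, which is the claim.

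The one genuine gap in this argument at the present point in the paper is the forward reference: the equinumerosity of decreasing weighted $\SF$s with permutations is not established here but in Section~\ref{sectionSFToDyckPath}, where $\phi_2$ identifies decreasing weighted $\SF$s with weighted Dyck paths (themselves counted by $n!$ through the Fran\c con--Viennot picture). This is the main thing the reader must accept provisionally. I expect one could instead bypass the count by proving surjectivity of $\phi_1$ directly — running the reversal on an \emph{arbitrary} decreasing weighted $\SF$ and checking it always yields a genuine $\SF$ — but this verification looks more laborious than the counting route, so I would keep the proof as above.
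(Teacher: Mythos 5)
Your proposal is correct and follows the paper's own route exactly: the paper also derives injectivity from Lemma~\ref{lemmaBij2And2} (by undoing the exchanges one at a time) and then invokes the fact, admitted provisionally and proved later via $\phi_2$, that decreasing weighted subexcedent functions are counted by $n!$. Your write-up is in fact more explicit than the paper's, which states the argument in one sentence; the reversal formula and the termination count via $\sum_j w_j$ are both consistent with Algorithm~\ref{algoPhi1}.
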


Note that the inverse map comes straightforwardly from
Lemma~\ref{lemmaBij2And2}.

\subsubsection{The statistics through $\phi_1$}

Let us first define the new statistics on the decreasing weighted \SF
s.

\begin{definition}
  Let $(u,w)$ be a decreasing weighted {\SF} of size $n$.
  \begin{itemize}
  \item The \emph{number of inversions} $\inv(u,w)$ is the sum of the
    values of $u$ and $w$.
  \item The \emph{total weight} $\tw(u,w)$ is the sum of the values of
    $w$.
  \item The \emph{descent set} of $(u,w)$ is
    \begin{equation}
      \Des(u,w) = \{i~|~w_i > w_{i+1} \} \cup \{i~|~w_i = w_{i+1}, u_i >
      u_{i+1} \}
    \end{equation}
    and its \emph{descent composition} $\CDes(u,w)$ is the composition
    of $n$ whose descent set is $Des(u, w)$.
  \item The \emph{statistic $\LC$} on $(u,w)$ is the composition
    $\LC(u)$.
  \end{itemize}
\end{definition}

\begin{remark}
\label{remarkDecreasingLehmer}
 We shall make some remarks on the previous definitions.
 \begin{enumerate}
  \item The previous definitions are still correct if the weighted
    {\SF} is not decreasing.  Moreover, if we associate a null weight
    with a \SF, those definitions give the same statistics as the
    usual ones on \SF s.
  \item Note that on a decreasing \SF, the mirror composition of the
    statistic $\LC$ exactly corresponds to the composition whose
    descent set is the set of nonzero values of $u$. Hence, we have
    directly $\tw(u,w) = \inv(u,w) - \maj(\overline{\LC(u,w)})$.
 \end{enumerate}
\end{remark}

\begin{proposition}
\label{lemmaStatsPhi1}
 Let $u$ be a \SF, then:
 \begin{itemize}
  \item $\tw(\phi_1(u)) = \inv(u) - \maj(\overline{\LC(u)})$;
  \item $\CDes(\phi_1(u)) = \CDes(u)$;
  \item $\LC(\phi_1(u)) = \LC(u)$.
 \end{itemize}
\end{proposition}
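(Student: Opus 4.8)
The three assertions split naturally: the $\LC$ identity is essentially a definition-chase, the $\CDes$ identity needs a tracking argument through Algorithm~\ref{algoPhi1}, and the total-weight identity will follow from the $\LC$ identity together with Remark~\ref{remarkDecreasingLehmer}(2). I would treat them in the order $\LC$, then $\tw$, then $\CDes$, so that the routine observations are dispatched first and the real work is isolated.

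For the $\LC$ identity, recall that $\LC(\phi_1(u))$ is by definition $\LC(u')$ where $u'$ is the underlying decreasing \SF. So it suffices to show that $\LC$ is invariant under one step of Algorithm~\ref{algoPhi1}. One step takes the pivot value $p$ at position $k$, finds the position $i$ of the rightmost occurrence of the largest value $\le p$ to its left, decrements $u_i$ by $1$, and swaps $u_i \leftrightarrow u_k$. I would observe that the multiset of \emph{nonzero} values, and hence the set $S$ built by the right-to-left reading procedure defining $\LC$, is unchanged: we remove one copy of $u_i$ and add one copy of $u_i - 1$, but since $u_i \le p$ and $p$ appears at position $k$ which lies to the right of $i$, the value $u_i - 1$ is already "covered" — more precisely, the right-to-left scan reaches the (moved) pivot $p$ before it reaches the decremented value, and the set $S$ only grows by entries determined by values exceeding $|S|$ at the moment they are read. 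I would make this precise by checking that the sequence of partial sets $S$ produced by the scan is identical before and after the swap; the key point is that $u_i$ and $u_i-1$ are read at positions where $|S|$ already satisfies the relevant inequalities the same way. Once $\LC(u') = \LC(u)$ is established, Remark~\ref{remarkDecreasingLehmer}(2) gives $\tw(\phi_1(u)) = \inv(\phi_1(u)) - \maj(\overline{\LC(\phi_1(u))})$, and since $\inv(u,w)$ is the sum of values of $u$ and $w$ — which is preserved step by step (each step subtracts $1$ from some $u_i$ and adds $1$ to some $w_k$) and equals $\inv(u)$ at the start — we get $\inv(\phi_1(u)) = \inv(u)$, yielding the first bullet.

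The $\CDes$ identity is the main obstacle. Here I would again argue by induction on the number of steps of Algorithm~\ref{algoPhi1}, showing that $\CDes(u_{\mathrm{cur}}, w_{\mathrm{cur}})$ is invariant under one step, where $\CDes(u,w) = \{i : w_i > w_{i+1}\} \cup \{i : w_i = w_{i+1},\ u_i > u_{i+1}\}$. A single step modifies entries at positions $i$ and $k$ only, so only the descent status at positions $i-1, i, k-1, k$ can change; I would check each of these four boundaries. The delicate part is that decrementing $u_i$ by $1$ before swapping is precisely what is needed to keep the descent status correct: the old pair $(u_i, u_{i+1})$ (with weights both eventually relevant) is replaced, after the dust settles, by configurations whose descent contributions match. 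I expect to use crucially Lemma~\ref{lemmaBij2And2}'s description — that between positions $i$ and $k$ there are no weighted values, and that all values strictly between are $\le u_k$ (the new value at position $i$) — to control the boundary at $i$ and at $k$. The cleanest packaging is probably to verify that for each relevant index $j$, the truth value of "$j \in \Des$" computed from $(w_j, w_{j+1}, u_j, u_{j+1})$ is the same before and after; since the weight changes only at $k$ (incremented) and $u$ changes only at $i$ (decremented) and at $i,k$ (swapped), a short case analysis on whether $i+1 = k$ or $i + 1 < k$ finishes it. I would present the $i+1 = k$ case first as it is the subtle one, then note that the $i+1 < k$ case is lighter because the weighted-value-free gap guarantees $w_j = 0$ for $i < j < k$, so the only possible new descents sit at the two isolated endpoints and are checked directly.
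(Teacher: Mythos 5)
Your treatment of the second and third bullets matches the paper's: the descent\--set invariance is proved there by exactly the boundary case analysis at $i-1$, $i$, $k-1$, $k$ that you describe, leaning on Lemma~\ref{lemmaBij2And2} for the weight\--free gap between $i$ and $k$, and the total\--weight identity is likewise deduced from Remark~\ref{remarkDecreasingLehmer} together with the step\--by\--step conservation of $\inv$.

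The gap is in your argument for $\LC$. Your verification target --- that ``the sequence of partial sets $S$ produced by the scan is identical before and after the swap'' --- is false, already for the first exchange on the paper's running example $u=315503200$: reading right to left, before the swap the partial sets at positions $5,4,3$ are $\{2,3\},\{2,3,5\},\{2,3,4,5\}$, while after the swap ($u=315403200$) they are $\{2,3\},\{2,3,4\},\{2,3,4,5\}$. Only the set obtained once \emph{both} modified positions have been read is restored, not the intermediate ones. (Your orientation claim is also reversed: after the swap the pivot sits at position $i$, the left one, so the right\--to\--left scan reaches the decremented value first.) Since $i$ and $k$ need not be adjacent, the positions strictly between them are scanned while the two sets differ in content and, in some cases, in cardinality, so one must argue that no further divergence is created there and that the sets merge again at position $i$; nothing in your sketch does this, and the ``key point'' you invoke is precisely what needs proof. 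The paper sidesteps the difficulty by decomposing each exchange into \emph{elementary} adjacent exchanges (Algorithm~\ref{algoElementaryExchange}), each of the local form $ab\mapsto (b\!+\!1)a$ with $a>b$, and then proving invariance of $\LC$ under such a move by a three\--case analysis on how $a$ and $b$ compare with $|S|$. Either adopt that reduction, or redo your direct argument by tracking exactly how the two sets differ between positions $k$ and $i$ (using that all intermediate values are strictly smaller than $u_i$) and showing the discrepancy is resolved at position $i$; as written, the $\LC$ bullet is not established.
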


To prove the last point of this proposition we shall consider the exchanges
of Algorithm~\ref{algoPhi1} as a succession of elementary exchanges
described by the following algorithm, where $i$ and $k$ come from the
notations of Algorithm~\ref{algoPhi1}.

\begin{algorithm}
\label{algoElementaryExchange}
 Set $j_1$ equal to $i$ and $j_2$ equal to $k$.
 \begin{itemize}
  \item Step 1: If $j_1 = k-1$, go to step 2. Otherwise, increment $u_{j_1+1}$
    and then swap $u_{j_1}$ and $u_{j_1+1}$. Set $j_1=j_1+1$ and redo
    Step 1.
  \item Step 2: If $j_2 = i$, the algorithm stops. Otherwise, decrement
    $u_{j_2-1}$ and then swap $u_{j_2}$ and $u_{j_2-1}$. Set
    $j_2 = j_2 - 1$ and redo Step 2.
 \end{itemize}
\end{algorithm}

For example, if at some point of Algorithm~\ref{algoPhi1}, we have
$u=\ldots4106\ldots$ and we have to exchange 6 with 4, the exchanges
of Algorithm~\ref{algoElementaryExchange} would be:
\begin{itemize}
 \item we begin with Step 1:
 \begin{itemize}
  \item $u=\ldots{\bf \textcolor{red}{24}}06\ldots$;
  \item $u=\ldots2{\bf \textcolor{red}{14}}6\ldots$;
 \end{itemize}
 \item and then apply Step 2:
 \begin{itemize}
  \item $u=\ldots21{\bf \textcolor{red}{63}}\ldots$;
  \item $u=\ldots2{\bf \textcolor{red}{60}}3\ldots$;
  \item $u=\ldots{\bf \textcolor{red}{61}}03\ldots$
 \end{itemize}
\end{itemize}

\begin{proof}[Proof of Proposition~\ref{lemmaStatsPhi1}]
 The proof of this proposition is based on the first point of
 Remark~\ref{remarkDecreasingLehmer} and works by proving that the
 statistics $\inv$, $\CDes$ and $\LC$ do not change at each exchange
 of Algorithm~\ref{algoPhi1}.
 
 The $\inv$ statistic is not modified since each decrementation of a
 value of $u$ is balanced by a incrementation of a value of $w$.

 In order to prove that the descent set of a weighted {\SF} does not
 change when we are doing an exchange of Algorithm~\ref{algoPhi1}, we
 have to study different cases depending on whether there is a descent
 at positions $i-1$, $i$, $k-1$, or $k$ or not. Let
 $(u^{(1)},w^{(1)})$ be the weighted {\SF} before the exchange and
 $(u^{(2)}, w^{(2)})$ the one after it.
 \begin{itemize}
  \item We start by considering what happens at position $k$.
  \begin{itemize}
   \item As $u_k^{(1)}$ is the rightmost occurrence of the $pivot$,
     we have $k \ne n$ as the pivot is nonzero and
     $u_k^{(1)} > u_{k+1}^{(1)}$ so the only way not to have a descent
     at $k$ is that $w_k^{(1)} < w_{k+1}^{(1)}$. But in this case
     Lemma~\ref{lemmaBij2And2} tells 
     us that the $pivot$ was at position $k+1$ at the previous step,
     so $u_i^{(1)} \leq u_{k+1}^{(1)} + 1$ and then after the exchange
     $u_k^{(2)} \leq u_{k+1}^{(2)}$ with $w_k^{(2)} \leq
     w_{k+1}^{(2)}$ so $k$ is not a descent.
   \item In any other case, $(u^{(1)},w^{(1)})$ has a descent in $k$,
     so $w^{(1)}_k \geq w^{(1)}_{k+1}$ and then $w^{(2)}_k >
     w^{(2)}_{k+1}$ so $(u^{(2)},w^{(2)})$ also has a descent in $k$.
  \end{itemize}
  \item If $i \neq k-1$, Lemma~\ref{lemmaBij2And2} implies that
    $w^{(1)}_{k-1} = 0$. Moreover, $u^{(1)}_{k-1} < u^{(1)}_k$ so $k-1$
    is not a descent for $(u^{(1)}, w^{(1)})$ and it is necessarily
    also the case for $(u^{(2)}, w^{(2)})$.
  \item If $i = k-1$, it is possible to have a descent or not in $i$.
  \begin{itemize}
   \item As $u^{(1)}_i < u^{(1)}_k$, the only way to have a descent in
     $i$ is to have $w^{(1)}_i > w^{(1)}_k$ and then $w^{(2)}_i \geq
     w^{(2)}_k$ with $u^{(2)}_i > u^{(2)}_k$.
   \item If $i$ is not a descent, $w^{(1)}_i \leq w^{(1)}_k$ and then
     $w_i^{(2)} < w_k^{(2)}$ so $i$ is not a descent either for
     $(u^{(2)}, w^{(2)})$.
  \end{itemize}
  \item For the descent in $i$ when $i < k-1$, we have $w^{(1)}_{i+1}
    = 0$ and $u^{(1)}_i > u^{(1)}_{i+1}$ so $i$ is always a descent
    of $(u^{(1)}, w^{(1)})$ in this case.  Moreover, we still have
    $w^{(2)}_{i+1} = 0$ with $u^{(2)}_i > u^{(2)}_{i+1}$ so it is also
    the case for $(u^{(2)}, w^{(2)})$.
  \item For the position $i-1$, suppose that $u_{i-1}^{(1)}$ is not strictly greater
    than the $pivot$. As $w^{(1)}_{i-1} = w^{(2)}_{i-1}$
    with $w^{(1)}_i = w^{(2)}_i$ and $u^{(2)}_{i-1} = u^{(1)}_{i-1}
    \leq u^{(1)}_i < u^{(2)}_i$, $(u^{(1)}, w^{(1)})$ has a descent in
    $i-1$ if and only if it is the case for $(u^{(2)}, w^{(2)})$. If
    $u_{i-1}^{(1)}$ is not strictly greater than the $pivot$, the same
    argument gives the same result.
 \end{itemize}
 We have shown that in any possible situation, the descent set is
 constant after each exchange of Algorithm~\ref{algoPhi1}.
 
 To prove that the $\LC$ statistic does not change at each step we
 prove that it is also true after each elementary exchange of
 Algorithm~\ref{algoElementaryExchange}.  As in the first step we have
 $u_{j_1} > u_{j_1 + 1}$ and in the second step we have $u_{j_2-1}
 \leq u_{j_2}$ we only have to prove that two \SF s $u^{(1)}$ and
 $u^{(2)}$ which differ only at positions $j$ and $j+1$ such that, if
 $u^{(1)} = \ldots ab\ldots$ with $a>b$, $u^{(2)} = \ldots
 b\hspace{-0.1cm}+\hspace{-0.13cm}1\hspace{0.12cm}a\ldots$, then
 $\LC(u^{(1)}) = \LC(u^{(2)})$. There are three
 different cases. Let $S$ be the set obtained during the computation
 of the two $\LC$ statistics before considering the value at position
 $j+1$.
 \begin{itemize}
  \item In $u^{(1)}$, if $b \leq |S|$ and $a \leq |S|$, then after
    considering those two values in $u^{(1)}$, the set used to compute
    $\LC(u^{(1)})$ is still $S$. In $u^{(2)}$ we also have $a \leq
    |S|$ and $b+1 \leq a \leq |S|$ so the set has not changed either.
  \item In $u^{(1)}$, if $b \leq |S|$ and $a > |S|$, let $\alpha$ be
    the $(a-|S|)$-th letter of $[1,n]\backslash S$, then after considering those two
    values, the set is equal to $S\cup \{\alpha\}$. In $u^{(2)}$, after
    considering $a$, the set is equal to $S\cup \{\alpha\}$ and
    then $b+1 \leq |S|+1$ so the set does not change.
  \item In $u^{(1)}$, if $b > |S|$ then the set is changed to $S\cup
    \{\beta\}$ where $\beta$ is the $(b-|S|)$-th letter of $[1,
      n]\backslash S$. Then as $a > b$, we have $a > |S| + 1$ and the
    set changes again to $S \cup \{\beta, \alpha\}$ where $\alpha$ is
    the $(a-|S|-1)$-th letter of $[1, n]\backslash
    (S\cup\{\beta\})$. For $u^{(2)}$, we still have $a > |S|$ so we
    add to $S$ the $(a-|S|)$-th value of $[1, n]\backslash S$ which is
    also $\alpha$ because we necessarily have $\beta < \alpha$. Then
    when we read $b+1$, we have $b + 1 > |S| +1$ so we add to the set
    the $(b - |S|)$-th value of $[1, n]\backslash (S\cup\{\alpha\})$
    which is again $\beta$, so the final set is the same in both
    cases.
 \end{itemize}
 This proves the last point of the proposition.
\end{proof}

\subsection{Decreasing weighted \SF s to weighted Dyck paths}
\label{subSectPhi2}

Let us now describe the final bijection between decreasing weighted
\SF s and weighted Dyck paths that corresponds to the third part of
Diagram~\eqref{eqSchemaBijs}.

\begin{equation*}
  \textcolor{light-gray}{\text{P} \overset{\psi_{FV}}{\longleftrightarrow}
  \text{WDP} \overset{\psi}{\longleftrightarrow}} \text{WDP}
  \overset{\phi_{2}}{\longleftrightarrow} \text{DWSF}
  \textcolor{light-gray}{\overset{\phi_{1}}{\longleftrightarrow} \text{SF}}
\end{equation*}
To describe it, we build a Dyck path
from a decreasing {\SF} and carry the weight without modifying
it. Then we show that it
sends the triple of statistics of the decreasing weighted \SF s to the
triple of statistics of the weighted Dyck paths.

\subsubsection{Description of the bijection $\phi_2$ between decreasing weighted \SF s and weighted Dyck paths}

The map $\phi_2$ on a decreasing weighted {\SF} $(u,w)$ of size~$n$ is
defined by carrying the weight $w$ and building the Dyck path $D$ from
the decreasing {\SF} $u$ using the following algorithm. 
\begin{algorithm}
\label{algoPhi2}
 Set $D_1 := /$ and $D_{2n} := \backslash$.  Then, for each $i$ in
 $\{1, \ldots, n-1\}$,
 \begin{itemize}
  \item set $D_{2i} := \backslash$ if $n-i$ is a value in $u$ and
    $D_{2i} := /$ otherwise;
  \item set $D_{2i+1} := \backslash$ if $u_i = 0$ and $D_{2i+1} := /$
    otherwise.
 \end{itemize}
\end{algorithm}

An example is given in Figure~\ref{figPhi2}. The positions of the
zeroes in the {\SF} are $\{3,5,6,8,9\}$ which correspond to the
positions $i$ such that $D_{2i+1} = \backslash$. The values correspond
to $n-i$ where $i$ are the positions where $D_{2i} = \backslash$.

\begin{figure}[h!t]
  \begin{center}
    \begin{tikzpicture}
      \node (eq) at (0,0) {$\phi_2(540300200, 002201000) = $};
      \node (F1) at (6.3,0) {
        \scalebox{0.4}{
          \begin{tikzpicture}
  \draw[dotted, thick, color=gray!60] (0, 0) grid (18, 6);
  \draw[rounded corners=1, color=black, line width=1] (0, 0) -- (1, 1)
  -- (2, 2) -- (3, 3) -- (4, 4) -- (5, 5) -- (6, 6) -- (7, 5) -- (8,
  4) -- (9, 5) -- (10, 4) -- (11, 3) -- (12, 2) -- (13, 1) -- (14, 0)
  -- (15, 1) -- (16, 2) -- (17, 1) -- (18, 0);

  \draw[dotted, very thick] (2, 0) -- (2, 6); \draw[dotted, very
    thick] (4, 0) -- (4, 6); \draw[dotted, very thick] (6, 0) -- (6,
  6); \draw[dotted, very thick] (8, 0) -- (8, 6); \draw[dotted, very
    thick] (10, 0) -- (10, 6); \draw[dotted, very thick] (12, 0) --
  (12, 6); \draw[dotted, very thick] (14, 0) -- (14, 6); \draw[dotted,
    very thick] (16, 0) -- (16, 6);
  
  \node (pond1) at (5, 5.5) {2}; \node (pond2) at (7, 5.5) {2}; \node
  (pond2) at (11, 3.5) {1};
\end{tikzpicture}
 }};
    \end{tikzpicture}
    \caption{An example of $\phi_2$.}
    \label{figPhi2}
  \end{center}
\end{figure}
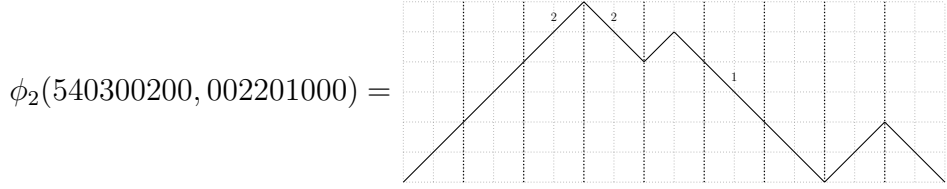

\begin{lemma}
\label{lemmaDefPhi2}
 In the path $D$ built in Algorithm~\ref{algoPhi2}, let $h_i$ be
 the height be the height of the $2i$-th step of $D$. Then, for all $i$, we
 have $(h_i-1)/2$ equal to the number of nonzero values smaller than or equal
 to $n-i$ in $u$ to the left of $u_i$.
\end{lemma}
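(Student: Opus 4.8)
The plan is to compute $h_i$ directly from Algorithm~\ref{algoPhi2} as a signed count of the first $2i-1$ steps of $D$, and then to recognise the resulting quantity inside $u$. Since $h_i$ is the height reached after the first $2i-1$ steps, it equals the number of up-steps minus the number of down-steps among $D_1,D_2,\dots,D_{2i-1}$. I would split these $2i-1$ steps into three blocks: the single step $D_1$, the $i-1$ steps $D_2,D_4,\dots,D_{2(i-1)}$, and the $i-1$ steps $D_3,D_5,\dots,D_{2i-1}$. By Algorithm~\ref{algoPhi2}, $D_1=/$ contributes $+1$. For $j\in\{1,\dots,i-1\}$ the step $D_{2j}$ is a down-step exactly when $n-j$ is a value of $u$; as $j$ runs over $\{1,\dots,i-1\}$ the integer $n-j$ runs over $\{n-i+1,\dots,n-1\}$, and since every value of a \SF{} of size $n$ is at most $n-1$, the number of down-steps in this block equals $A:=\#\{v:\ v\text{ is a value of }u,\ v>n-i\}$, a net contribution of $(i-1)-2A$. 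Similarly, for $j\in\{1,\dots,i-1\}$ the step $D_{2j+1}$ is a down-step exactly when $u_j=0$, so the last block contributes $(i-1)-2B$ where $B:=\#\{\,1\le j\le i-1:\ u_j=0\,\}$. Adding up,
\begin{equation*}
  h_i = 1 + \bigl(i-1-2A\bigr) + \bigl(i-1-2B\bigr) = 2i-1-2(A+B),
\end{equation*}
hence $(h_i-1)/2 = (i-1)-A-B$.

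It then remains to identify $(i-1)-A-B$ with the count in the statement. The subtlety is that $A$ naturally counts \emph{values} occurring anywhere in $u$, whereas the lemma asks for a count of \emph{positions to the left of $i$}; these agree here precisely because $u$ is subexcedent. Indeed, if $u_\ell>n-i$ then $u_\ell\le n-\ell$ forces $\ell<i$, so every value of $u$ exceeding $n-i$ already occurs at a position $<i$; and since $u$ is decreasing its nonzero values are pairwise distinct, so $A=\#\{\,1\le\ell\le i-1:\ u_\ell>n-i\,\}$. Consequently the $i-1$ positions strictly to the left of $i$ partition into those with $u_\ell>n-i$ (there are $A$), those with $u_\ell=0$ (there are $B$), and those with $0<u_\ell\le n-i$; therefore $(i-1)-A-B$ equals the number of positions $\ell<i$ with $0<u_\ell\le n-i$, i.e.\ the number of positive values of $u$ lying to the left of $i$ that do not exceed $n-i$, which is the claim. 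The boundary cases $i=1$ (empty blocks, count $0$) and $i=n$ (no positive value is $\le 0$, count $0$) are covered by the same formulas.

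The main obstacle---and really the only non-routine step---is the subexcedence observation in the second paragraph: it is what converts the value-count $A$ into a position-count and thereby makes the three-way partition of $\{1,\dots,i-1\}$ available; without it the contributions of the $D_{2j}$-block and the $D_{2j+1}$-block would not combine cleanly. The rest is bookkeeping of up- and down-steps.
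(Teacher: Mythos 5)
Your argument is correct, and it reaches the conclusion by a different route from the paper. The paper proves the lemma by induction on $i$: writing $W_i$ for the maximal weight, it checks in each of the four cases (determined by whether $n-i$ is a value of $u$ and whether $u_i=0$) that $h$ changes by $+2$, $-2$, or $0$ exactly when $W$ changes by $+1$, $-1$, or $0$. You instead compute $h_i$ in closed form as a signed count over the first $2i-1$ steps, obtaining $h_i=2i-1-2(A+B)$, and then identify $(i-1)-A-B$ with the desired quantity via a three-way partition of the positions $1,\dots,i-1$. Both arguments rest on the same two facts, which you make explicit and the paper leaves implicit in its case analysis: subexcedence forces any value exceeding $n-i$ to occupy a position strictly left of $i$, and the decreasing condition makes the nonzero values pairwise distinct, so that ``$n-j$ is a value of $u$'' accounts for exactly one position. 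The paper's induction is shorter to write down; your version has the advantage of isolating these two hypotheses and of giving the endpoint $h_n=1$ (used just afterwards to see that $D$ is a genuine Dyck path) as a special case of the same formula rather than as the terminal step of an induction. One small point of care you handled correctly: the phrase ``values smaller than or equal to $n-i$'' in the statement must be read as \emph{positive} values (consistently with the parenthetical gloss in the definition of the weight, whose displayed inequality $0<u_i<n-k$ should be read with $\le$), and your three-way partition is exactly what makes that reading come out right.
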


\begin{proof}
  Let us call $W$ the maximal weight associated with $u$. Our aim is to
  prove that for all $i$ we have $(h_i-1)/2=W_i$.
  
  We prove this lemma by induction. For $i=1$, we have
  $(h_1-1)/2=0=W_0$  and the property holds. Assume the property for a
  given $i$. We have $W_{i+1} = W_{i} + 1$ if and only if $n-i$ is not
  a value in $u$ and $u_i \ne 0$ so that all the values on the left of
  $u_i$ that count for $W_i$ also count for $u_{i+1}$ and it is also
  the case for $u_i$. In this case we have $D_{2i}=D_{2i+1}=/$ and so
  $h_{i+1} = h_i + 2$ so the property holds. With the same idea, we
  have $W_{i+1} = W_{i} - 1$ if and only if $n-i$ is a value in $u$
  and $u_i = 0$, in this case we have $h_{i+1} = h_i - 2$. Finally, in
  the other two cases we have $W_i = W_{i+1}$ and
  $h_i=h_{i+1}$.
\end{proof}

\begin{proposition}
 The map $\phi_2$ is well-defined from decreasing weighted \SF s to
 weighted Dyck paths and is a bijection.
\end{proposition}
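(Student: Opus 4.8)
The plan is to prove well\-definedness directly and then exhibit an explicit inverse. A counting argument is not available here: the purpose of this proposition, together with Lemma~\ref{lemmaBij2And2}, is precisely to deduce that decreasing weighted \SF s of size $n$ number $n!$, thereby completing the proof of Proposition~\ref{propPhi1Bij}.

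\emph{Well\-definedness.} Fix a decreasing weighted \SF\ $(u,w)$ of size $n$ and let $D$ be the output of Algorithm~\ref{algoPhi2}. Counting backslash steps is bookkeeping: if $z$ is the number of zeros of $u$, then (as $u$ is decreasing) $u$ has $n-z$ distinct positive values, so $D_{2i}=\backslash$ for exactly $n-z$ indices $i\in\{1,\dots,n-1\}$, while $D_{2i+1}=\backslash$ for exactly the $z-1$ zero positions of $u$ in $\{1,\dots,n-1\}$ (recall $u_n=0$); with $D_{2n}=\backslash$ this gives $1+(n-z)+(z-1)=n$ down steps, hence $n$ up steps. Non\-negativity of $D$ and its return to $0$ follow from Lemma~\ref{lemmaDefPhi2}, which identifies $(h_i-1)/2$ with the coordinate $W_i$ of the maximal weight of $u$: then $h_i=2W_i+1\ge 1$, the vertex after $D_{2i}$ sits at height $h_i\pm1\ge 0$, and $W_n=0$ forces the last vertex to be $0$. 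Finally $w$ meets the weighted\-Dyck\-path bound because $w_i\le W_i=(h_i-1)/2$, again by Lemma~\ref{lemmaDefPhi2}. So $\phi_2(u,w)$ is a weighted Dyck path.

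\emph{The inverse.} Start from a weighted Dyck path $(D,w)$ of size $n$. Reading Algorithm~\ref{algoPhi2} backwards, $D$ determines exactly two data: the set $V=\{\,n-i:1\le i\le n-1,\ D_{2i}=\backslash\,\}$ of positive values and the set $Z=\{\,i:1\le i\le n-1,\ D_{2i+1}=\backslash\,\}\cup\{n\}$ of zero positions. Since $D$ is a Dyck path, $\#\{i:D_{2i}=\backslash\}+\#\{i:D_{2i+1}=\backslash\}=n-1$ over $i\in\{1,\dots,n-1\}$, so $|V|$ equals the number $m$ of nonzero positions $p_1<\dots<p_m$ outside $Z$; define $u$ by placing the decreasing listing $v_1>\dots>v_m$ of $V$ at $p_1<\dots<p_m$ and $0$ elsewhere. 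Then $u$ is automatically a decreasing word, Algorithm~\ref{algoPhi2} visibly sends $u$ back to $D$, and carrying $w$ recovers the weight, which satisfies the decreasing\-weighted\-\SF\ constraint by the same use of Lemma~\ref{lemmaDefPhi2}. The one genuinely nontrivial point --- and the expected main obstacle --- is that the reconstructed $u$ is an honest \SF, \ie\ $u_i\le n-i$: with $i_j$ the $j$-th index satisfying $D_{2i_j}=\backslash$ (so $v_j=n-i_j$), the height of $D$ after its first $2i_j$ steps equals $2(i_j-j-a)\ge 0$ with $a=\#\{i<i_j:D_{2i+1}=\backslash\}$; when this height is $0$ the step $D_{2i_j+1}$ cannot be $\backslash$ (the path would dip below the axis), so position $i_j$ is nonzero and $p_j\le i_j$, while if the height is positive then $[1,i_j-1]$ already contains at least $j$ nonzero positions, so again $p_j\le i_j$ --- whence $u_{p_j}=v_j=n-i_j\le n-p_j$.

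\emph{Conclusion.} The two constructions are mutually inverse, using that a decreasing \SF\ is determined by its set of positive values and its set of zero positions; hence $\phi_2$ is a bijection. In particular decreasing weighted \SF s of size $n$ are in bijection with weighted Dyck paths of size $n$, hence through $\psi_{FV}$ with $\SG_n$, so they number $n!$, which closes the gap left open before Proposition~\ref{propPhi1Bij} and completes its proof.
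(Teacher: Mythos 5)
Your proof is correct, but it takes a genuinely different route from the paper's at the bijectivity step. The paper does not construct an inverse: it first reduces to the unweighted objects (since, by Lemma~\ref{lemmaDefPhi2}, the weight constraints correspond exactly on both sides), then invokes the fact that decreasing \SF s and Dyck paths are both counted by the Catalan numbers --- the former via the bijection with nondecreasing parking functions given earlier --- so that injectivity suffices; and injectivity is immediate because a decreasing \SF\ is determined by its set of positive values and its set of zero positions. Your opening claim that ``a counting argument is not available'' is therefore too strong: the $n!$ count of the weighted objects is indeed only obtained afterwards, but the Catalan count of the underlying unweighted objects is established independently, and that is exactly what the paper leans on. That said, your route buys something real: an explicit description of $\phi_2^{-1}$, and above all a direct verification that the reconstructed word satisfies the subexcedence condition $u_{p_j}\le n-p_j$ (the height computation $2(i_j-j-a)\ge 0$ and the case split according to whether this height vanishes), which is the genuine content of surjectivity and which the paper's one-line injectivity argument never has to confront. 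Your step counts for well-definedness and your use of Lemma~\ref{lemmaDefPhi2} for nonnegativity and for the weight bound agree with the paper's argument.
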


\begin{proof}
 The image of a decreasing weighted {\SF} is a
 path. Lemma~\ref{lemmaDefPhi2} proves that the height of the path is
 always nonnegative and that $h_n = 1$.  As $D_{2n} = \backslash$, the
 path is a Dyck path. Lemma~\ref{lemmaDefPhi2} also gives us that the
 constraints on the weight of a decreasing weighted {\SF} correspond
 to the constraints of its image.
 
 As the constraints for the weights correspond exactly from one object
 to the other, we only need to prove that $\phi_2$ is a bijection from
 decreasing \SF s to Dyck paths to prove that $\phi_2$ is a bijection
 on the weighted objects.
 
 As decreasing \SF s and Dyck paths are both enumerated by the Catalan
 numbers, we only need to prove that this map is injective. The only
 way to have the same image from two decreasing \SF s is that the
 nonzero values and their positions are fixed, but as the \SF s are
 decreasing, there is no choice in the order of those values.
\end{proof}

Note that by proving that $\phi_2$ is a bijection, we proved that
decreasing weighted \SF s are enumerated by $n!$, so this finishes the
proof of Proposition~\ref{propPhi1Bij}.

\subsubsection{The statistics through $\phi_2$}

\begin{proposition}
\label{lemmaStatsPhi2}
 Let $(u,w)$ be a decreasing weighted {\SF}. We have
 \begin{itemize}
  \item $\tw(u,w) = \tw(\phi_2(u,w))$;
  \item $\LC(u,w) = \GC^0(\phi_2(u,w))$;
  \item $\CDes(u,w) = \CDes^0(\phi_2(u,w))$.
 \end{itemize}
\end{proposition}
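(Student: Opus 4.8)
The plan is to establish the three equalities in turn, relying on the fact that $\phi_2$ reproduces the weight $w$ verbatim and that every step of the Dyck path $\phi_2(u,w)$ is read directly off $u$ through Algorithm~\ref{algoPhi2}. The first equality is immediate: since the weight is unchanged, $\tw(u,w)=\sum_i w_i=\tw(\phi_2(u,w))$.

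For the descent equality, set $(D,w)=\phi_2(u,w)$ and compare $\Des(u,w)$ with the descent datum of $\phi_2(u,w)$ as it enters the target triple $(\GC^0,\CDes^0,\tw)$, namely $\Des^0(D,w)$. Both sets already contain $\{i~|~w_i>w_{i+1}\}$, so it suffices to take an index $i$ with $w_i=w_{i+1}$ and show that $u_i>u_{i+1}$ exactly when $D_{2i+1}=/$. By Algorithm~\ref{algoPhi2} the step $D_{2i+1}$ is $/$ precisely when $u_i\ne 0$, so the assertion reduces to a statement about $u$ alone: for a decreasing {\SF}, $u_i>u_{i+1}$ holds if and only if $u_i\ne 0$. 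This is a short case check. If $u_i=0$ then $u_i\le u_{i+1}$ and there is no descent; if $u_i\ne 0$ and $u_{i+1}=0$ then $u_i>0=u_{i+1}$; and if both are nonzero, then since the nonzero letters of $u$ form a strictly decreasing subword and $u_i$ precedes $u_{i+1}$ in it, again $u_i>u_{i+1}$. Hence $\Des(u,w)=\Des^0(D,w)$ and the descent compositions agree.

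For the $\GC^0$ equality, keep $(D,w)=\phi_2(u,w)$. By Algorithm~\ref{algoPhi2}, $D_{2i}=\backslash$ exactly when $n-i$ is a value occurring in $u$, so
\[
\GDes^0(D,w)=\{\,i\in[1,n-1]~|~n-i\text{ is a value of }u\,\}=\{\,n-v~|~v\text{ a value of }u\,\},
\]
where we use that every value of $u$ lies in $[1,n-1]$. By the second part of Remark~\ref{remarkDecreasingLehmer}, the set of values of $u$ is precisely the descent set of the mirror composition $\overline{\LC(u)}$. Applying the reflection $v\mapsto n-v$ to the descent set of a composition yields the descent set of its mirror image; since mirroring is an involution, the set displayed above is the descent set of $\overline{\overline{\LC(u)}}=\LC(u)$. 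Thus $\GC^0(D,w)$ and $\LC(u)$ are compositions of $n$ with the same descent set, hence equal, and as $\LC(u,w)$ is by definition $\LC(u)$ we obtain $\LC(u,w)=\GC^0(\phi_2(u,w))$.

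None of the three points is deep; the only place demanding care is the last one, where one must keep straight the two distinct reflections involved — the reflection $v\mapsto n-v$ coming from Algorithm~\ref{algoPhi2} (values of $u$ versus indices of $D$) and the mirror image of a composition built into the definition of $\LC$ — together with, for the descent equality, the observation (special to \emph{decreasing} {\SF}s) that when adjacent weights coincide a descent of $(u,w)$ at $i$ is equivalent to $u_i\ne 0$, which is exactly the feature that makes the descent statistic survive $\phi_2$.
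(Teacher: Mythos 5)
Your proof is correct and follows essentially the same route as the paper's, merely filling in the details the paper leaves implicit (that for a decreasing subexcedent function $u_i>u_{i+1}$ is equivalent to $u_i\ne 0$, and the bookkeeping of the two reflections in the comparison of $\LC(u)$ with $\GC^0$). You were also right to read the third item as $\CDes(u,w)=\CDes^0(\phi_2(u,w))$: as printed the statement uses $\CDes$ on the right-hand side, but the paper's own proof matches the zeroes of $u$ with the odd-position steps $D_{2i+1}$, i.e.\ computes $\Des^0$, and that is the version required in Theorem~\ref{globalTh} and the only one that holds on the running example $(540300200,002201000)$.
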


\begin{proof}
 As the weight is carried without being changed, we have $\tw(u,w) = \tw(\phi_2(u,w))$. For
 the descent set, as the weight does not change and the positions of the
 zeroes in $u$ correspond to the $\backslash$ steps at odd positions
 in $D$, we also have $\CDes(u,w) = \CDes(\phi_2(u,w))$. Moreover,
 $\LC(u,w)$ is the mirror composition of the composition whose descent
 set is the nonzero values in $u$ and $\GC^0(D,w)$ is related to the
 $\backslash$ steps at even positions in $D$, so that we also have
 $\LC(u,w) = \GC^0(\phi_2(u,w))$.
\end{proof}

\section{Main result}
\label{sectionGlobal}

\subsection{Proof of previous conjectures}

We now have all the tools we need to prove the conjectures
of~\cite{NTW}. To do so, we prove Conjecture~6.2 which is the
equidistribution of both triples of statistics.

Let $\Phi = \psi_{FV}^{-1}\circ\psi^{-1}\circ\phi_2\circ\phi_1$.

\begin{theorem}
  \label{globalTh}
 Let $I$ and $J$ be two compositions of $n$. We have
 \begin{equation}
  \sum_{\substack{u \in \SFE_n \\ \CDes(u) = I \\ \LC(u) = J}}
  q^{\inv(u)- \maj(\overline{J})} = \sum_{\substack{\sigma \in \SG_n
      \\ \Rec(\sigma) = I \\ \GC(\sigma) = J}} q^{\tto(\sigma)}.
 \end{equation}
\end{theorem}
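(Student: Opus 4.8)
The plan is to assemble the four bijections from Diagram~\eqref{eqSchemaBijs} into a single composite bijection and track the triple of statistics through each stage, using the four propositions already proved. Concretely, set $\Phi = \psi_{FV}^{-1} \circ \psi \circ \phi_2 \circ \phi_1$, which is a bijection from $\SFE_n$ to $\SG_n$ since each of $\phi_1$ (Proposition~\ref{propPhi1Bij}), $\phi_2$ (the proposition establishing $\phi_2$ is a bijection), $\psi$ (an involution on weighted Dyck paths), and $\psi_{FV}$ (Algorithm~\ref{algoFV}/\ref{algoInverseFV}) is a bijection. The theorem will follow once we check that $\Phi$ carries the triple $(\CDes, \LC, \inv - \maj(\overline{\LC}))$ on $\SFE_n$ to the triple $(\Rec, \GC, \tto)$ on $\SG_n$, since then the map $u \mapsto \Phi(u)$ restricts to a bijection between the index set of the left-hand sum (those $u$ with $\CDes(u) = I$, $\LC(u) = J$) and the index set of the right-hand sum (those $\sigma$ with $\Rec(\sigma) = I$, $\GC(\sigma) = J$), and it matches the exponents $q^{\inv(u) - \maj(\overline{J})}$ with $q^{\tto(\sigma)}$ term by term.

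The statistic-tracking is a chain of four equalities, one per arrow, read off directly from the propositions already in hand. Starting from $u \in \SFE_n$: by Proposition~\ref{lemmaStatsPhi1}, $\phi_1(u) = (u',w')$ is a decreasing weighted \SF\ with $\CDes(\phi_1(u)) = \CDes(u)$, $\LC(\phi_1(u)) = \LC(u)$, and $\tw(\phi_1(u)) = \inv(u) - \maj(\overline{\LC(u)})$. By Proposition~\ref{lemmaStatsPhi2}, applying $\phi_2$ gives a weighted Dyck path $(D,w)$ with $\CDes(D,w) = \CDes(u',w')$, $\GC^0(D,w) = \LC(u',w')$, and $\tw(D,w) = \tw(u',w')$. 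By Proposition~\ref{lemmaStatsInvol}, applying $\psi$ turns $\GC^0$ and $\CDes^0$ into $\GC$ and $\CDes$ and preserves $\tw$; the only subtlety here is that $\CDes^0$, not $\CDes$, is the statistic preserved by $\phi_2$, so one should either note that the composite $\psi \circ \phi_2$ sends $\CDes$ of the \SF\ to $\CDes^0$ and then $\psi$ sends $\CDes^0$ to $\CDes$, or—cleaner—observe that $\phi_2$ can equally be described so that its image already sits in the ``type $0$'' world. Finally, by Proposition~\ref{lemmaStatsFV}, $\psi_{FV}^{-1}$ of a weighted Dyck path $(D,w)$ is a permutation $\sigma$ with $\tto(\sigma) = \tw(D,w)$, $\GC(\sigma) = \GC(D,w)$, and $\Rec(\sigma) = \CDes(D,w)$. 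Composing, $\tto(\Phi(u)) = \inv(u) - \maj(\overline{\LC(u)})$, $\GC(\Phi(u)) = \LC(u)$, and $\Rec(\Phi(u)) = \CDes(u)$, which is exactly what is needed.

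The one genuine bookkeeping hazard—and the step I would write out most carefully—is the $\CDes$ versus $\CDes^0$ discrepancy at the $\psi$ stage, together with making sure the mirror-image conventions in the definition of $\LC$ on \SF s line up with the $\GDes$-to-descent-set shift $\{d-1 \mid d \in \GDes\}$ used for $\GC$ on permutations and on weighted Dyck paths. Once the intermediate statistics $\GC^0$, $\CDes^0$ were introduced precisely so that $\psi$ acts transparently (Lemma~\ref{otherDefPsi}), this is purely a matter of chaining, but it is worth stating explicitly that the composite $\psi \circ \phi_2 \circ \phi_1$ sends $\CDes$ of the \SF\ to $\CDes$ of the Dyck path and $\LC$ of the \SF\ to $\GC$ of the Dyck path, before invoking Proposition~\ref{lemmaStatsFV}. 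No new combinatorics is required; the content of the theorem is entirely in the four section-level propositions, and this proof simply records that their composition does the job. I expect the proof to be essentially a one-paragraph diagram-chase with a sentence flagging the type-$0$ convention.
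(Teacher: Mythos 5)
Your proposal is correct and is essentially identical to the paper's proof: the paper defines $\Phi = \psi_{FV}^{-1}\circ\psi^{-1}\circ\phi_2\circ\phi_1$ (the same map, since $\psi$ is an involution) and chains Propositions~\ref{lemmaStatsPhi1}, \ref{lemmaStatsPhi2}, \ref{lemmaStatsInvol}, and \ref{lemmaStatsFV} exactly as you describe. Your flag about the $\CDes$ versus $\CDes^0$ convention at the $\phi_2$/$\psi$ interface is well taken (the statement of Proposition~\ref{lemmaStatsPhi2} is most naturally read with $\CDes^0$ on the Dyck-path side, as its proof via odd-position $\backslash$ steps indicates), and your resolution of it is the right one.
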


\begin{proof}
  As a composition of bijections, the map $\Phi$ is a bijection
  between {\SF} and permutations. Let $u$ be a {\SF}. Applying
  Propositions~\ref{lemmaStatsPhi1}, \ref{lemmaStatsPhi2},
  \ref{lemmaStatsInvol}, and \ref{lemmaStatsFV} give
  \begin{equation}
    \begin{array}{rcl}
      \LC(u) & = & \GC(\Phi(u)) \\
      \CDes(u) & = & \Rec(\Phi(u)) \\
     \inv(u) - \maj\left(\overline{\LC(u)}\right) & = & \tto(\Phi(u))
    \end{array}
  \end{equation}
  which proves the theorem.
\end{proof}

\begin{remark}
  The other conjectures of~\cite{NTW} are directly obtained from this
  one.
\end{remark}

Proving those conjectures proves that it is possible to interpret the
entries of
the transitions matrices of~\cite{NTW} in terms of the triple of
statistics $(\GC,\Rec,\tto)$. It also gives another $q$-refinement of the
steady-state probabilities of the PASEP with statistics that arise from
the combinatorics of the PASEP.

\subsection{Other properties of this bijection}

In addition to the three statistics we have studied, the bijection we
defined from subexcedent functions to permutations also carries another
statistic.

\begin{definition}
  Define the \emph{left-to-right maxima} of a permutation as the
  values with only smaller values to their left.

  The same statistic on subexcedent functions is defined as the
  positions containing zeroes.
\end{definition}

For example, with $u = 315503200$ as with $\sigma=\Phi(u)=528713649$ the
right to left maxima are $5$, $8$, and $9$.

Note that the definition of the left-to-right maxima of a subexcedent function
corresponds to its definition on permutations after taking the Lehmer
code of the inverse of the permutation.

\begin{proposition}
  \label{LRM}
  Let $u$ be a subexcedent function. Its left-to-right maxima
  corresponds to the left-to-right maxima of $\Phi(u)$.
\end{proposition}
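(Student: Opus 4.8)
The plan is to follow the left-to-right maxima statistic along the chain $\Phi=\psi_{FV}^{-1}\circ\psi^{-1}\circ\phi_{2}\circ\phi_{1}$ (recalling that $\psi^{-1}=\psi$), rewriting at each stage the predicate ``$k$ is a left-to-right maximum'' as a condition on the intermediate object. Writing $(u',w')=\phi_{1}(u)$, the target statement to reach is: the value $k$ is a left-to-right maximum of $\Phi(u)$ if and only if $u'_{k}=0$ and $w'_{k}=0$, and this in turn holds if and only if $u_{k}=0$. Since the left-to-right maxima of the subexcedent function $u$ are by definition the positions where $u$ vanishes, this gives the proposition.

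First I would treat the Fran\c con--Viennot step: for $\sigma\in\SG_{n}$ with $\psi_{FV}(\sigma)=(D,w)$, the value $k$ is a left-to-right maximum of $\sigma$ if and only if $w_{k}=0$ and $D_{2k}=\backslash$. By Algorithm~\ref{algoFV}, with $k=\sigma_{j}$, the condition $D_{2k}=\backslash$ says $\sigma_{j-1}<k$ while $w_{k}=\tto_{k}(\sigma)=0$ says $k$ is never the ``$2$'' of a $\312$ pattern. If $k$ is a left-to-right maximum both clearly hold (an occurrence of $k$ as a ``$2$'' would force a larger entry to its left); conversely, if $k$ is not a left-to-right maximum, let $v>k$ be the rightmost entry lying left of $k$, at position $p$: then $p\neq j-1$ (else $\sigma_{j-1}=v>k$ contradicts $D_{2k}=\backslash$), every entry between positions $p+1$ and $j-1$ is $<k$, and $(p,j)$ is a $\312$ pattern with $k$ as its ``$2$'', so $w_{k}>0$. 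The next two steps are immediate: $\psi$ fixes the weight and $\psi(D)_{2k}=D_{2k+1}$ for $k\le n-1$ by Lemma~\ref{otherDefPsi}, so $k\le n-1$ is a left-to-right maximum of $\psi_{FV}^{-1}(\psi(D,w))$ iff $w_{k}=0$ and $D_{2k+1}=\backslash$; and by Algorithm~\ref{algoPhi2}, writing $(D,w)=\phi_{2}(u',w')$, one has $D_{2k+1}=\backslash$ iff $u'_{k}=0$ for $k\le n-1$. The case $k=n$ is trivial on both sides ($n$ is always a left-to-right maximum, and $u'_{n}=0$, $w'_{n}=0$ hold automatically). Combining these yields the displayed characterization.

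It then remains to show that, for $(u',w')=\phi_{1}(u)$, one has ``$u'_{k}=0$ and $w'_{k}=0$'' exactly when $u_{k}=0$. For this I would establish two invariants of Algorithm~\ref{algoPhi1}. A position carrying the value $0$ is never selected in Step~2: it cannot be the pivot position $k$ since the pivot is $\ge1$, and it cannot be the source position $i$, because by definition of the pivot there is a positive value $\le$ the pivot strictly to the left of $k$, so the largest value $\le$ the pivot there is positive. By induction on the steps, a position that starts with value $0$ keeps value $0$ and weight $0$ throughout, hence $u'_{k}=0$ and $w'_{k}=0$. Conversely, if $u_{k}\neq0$: either position $k$ is a target at some step, in which case its weight becomes $\ge1$ and stays there since weights only increase, so $w'_{k}\ge1$; or it is never a target, in which case $w'_{k}=0$, but its value changes only by its becoming a source, where it receives the pivot value $\ge1$, so $u'_{k}\neq0$. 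In either case ``$u'_{k}=0$ and $w'_{k}=0$'' fails. Putting the steps together, $k$ is a left-to-right maximum of $\Phi(u)$ iff $u_{k}=0$, i.e.\ iff $k$ is a left-to-right maximum of $u$.

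The main obstacle is this last step through $\phi_{1}$. The other three maps have closed-form descriptions from which the statistic can simply be read off, whereas $\phi_{1}$ is an iterative sorting procedure, so one must argue with genuine invariants of Algorithm~\ref{algoPhi1} — that weights are non-decreasing, that a position used as a swap target acquires a positive weight, and that a position used as a swap source receives a value $\ge1$ — and verify that a $0$-entry can never participate in a swap. Step~1 is the only other point requiring care, but the $\312$-pattern argument settles it directly.
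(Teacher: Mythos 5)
Your proposal is correct and follows essentially the same route as the paper: it characterizes the left-to-right maxima on each intermediate object (via the $\312$-pattern argument of Lemma~\ref{equivLRM} for the Fran\c con--Viennot step, Lemma~\ref{otherDefPsi} for $\psi$, the odd-position $\backslash$ steps for $\phi_2$) and then shows that zero entries are inert under Algorithm~\ref{algoPhi1}. Your treatment of the $\phi_1$ step is in fact slightly more careful than the paper's, since you separate the ``never a target, hence weight zero'' and ``never a source, hence value unchanged'' invariants explicitly.
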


To prove this proposition, we need to see what happens to this
statistic through the different bijections. First we need the
following lemma that gives an equivalent definition of the left-to-right
maxima in terms of descents and $\312$ patterns.

\begin{lemma}
  \label{equivLRM}
  Let $\sigma$ be a permutation. A value $k = \sigma_j$ is 
  a left-to-right maximum of $\sigma$ if and only if $\sigma_{j-1} < \sigma_j$ and
  $\tto_k(\sigma)=0$.
\end{lemma}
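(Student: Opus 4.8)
The plan is to prove the characterization of left-to-right maxima directly from the definitions, using the convention $\sigma_0 = 0$ and $\sigma_{n+1} = n+1$ already in force. Recall that $k = \sigma_j$ is a left-to-right maximum exactly when every value to its left is smaller, i.e. $\sigma_i < k$ for all $i < j$.

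\medskip

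\noindent\textbf{The forward direction.} Suppose $k = \sigma_j$ is a left-to-right maximum. Then $\sigma_{j-1} < \sigma_j$ is immediate since $\sigma_{j-1}$ is one of the values strictly to the left of $k$ (and for $j=1$ it holds by the convention $\sigma_0 = 0 < k$). For the claim $\tto_k(\sigma) = 0$: recall that $k$ appears as the ``$2$'' in a $\312$ pattern $(i,\ell)$ when $\ell > i+1$ and $\sigma_{i+1} = k < \sigma_\ell < \sigma_i$. But such a pattern requires a value $\sigma_i$ with $\sigma_i > k$ sitting at position $i = j-1$, to the immediate left of $k$; this contradicts $k$ being a left-to-right maximum. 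Hence no such pattern exists and $\tto_k(\sigma) = 0$.

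\medskip

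\noindent\textbf{The reverse direction — the main obstacle.} Suppose $\sigma_{j-1} < \sigma_j = k$ and $\tto_k(\sigma) = 0$; we must show $k$ is a left-to-right maximum. Assume not: there is some position $i < j$ with $\sigma_i > k$. Take $i$ to be the \emph{largest} such position, so $i < j-1$ (strict, since $\sigma_{j-1} < k$), and every value strictly between positions $i$ and $j$ is smaller than $k$; in particular $\sigma_{i+1} < k$. I would like to produce a $\312$ pattern with $k$ as the ``$2$''. The pattern $(i, j)$ needs $\sigma_{i+1} < \sigma_j < \sigma_i$, i.e. $\sigma_{i+1} < k < \sigma_i$: the second inequality holds by choice of $i$, and the first holds because $\sigma_{i+1}$ lies strictly between $i$ and $j$ so is $< k$ (here we use $i+1 < j$, i.e. $i < j-1$). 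Also $j > i+1$. Therefore $(i,j)$ is a $\312$ pattern in which $k = \sigma_j$ plays the role of the ``$2$'', contradicting $\tto_k(\sigma) = 0$. Hence no such $i$ exists and $k$ is a left-to-right maximum.

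\medskip

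The only subtlety is the strict inequality $i < j-1$ needed so that position $i+1$ is genuinely interior and position $j$ lies strictly to the right of $i+1$; this is exactly where the hypothesis $\sigma_{j-1} < k$ is used, since it forces $j-1$ not to be the witnessing position $i$. With that in hand both directions are short, and no case analysis on $j=1$ versus $j>1$ is needed beyond noting the boundary convention handles $j=1$ in the forward direction trivially.
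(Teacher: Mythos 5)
Your reverse direction is correct and is essentially the paper's argument: take the rightmost position $i<j$ with $\sigma_i>\sigma_j$, use maximality to get $\sigma_{i+1}<\sigma_j<\sigma_i$, and use the hypothesis $\sigma_{j-1}<\sigma_j$ to guarantee $i<j-1$, so that $(i,j)$ is a legitimate $\312$ pattern with $k=\sigma_j$ in the role of the $2$. You are in fact more explicit than the paper about where $\sigma_{j-1}<\sigma_j$ is needed (the paper leaves the check $j>i+1$ implicit).

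The forward direction, however, contains a genuine error. You ``recall'' that $k$ appears as the $2$ in a pattern $(i,\ell)$ when $\sigma_{i+1}=k<\sigma_\ell<\sigma_i$, and from this deduce that such a pattern would force a larger value at position $i=j-1$. But in a $\312$ pattern $(i,\ell)$ with $\sigma_{i+1}<\sigma_\ell<\sigma_i$, the value playing the role of the $2$ is the middle value $\sigma_\ell$ (sitting at the later position $\ell$), not $\sigma_{i+1}$, which is the $1$; this is exactly the convention you yourself use, correctly, in the reverse direction, and it is confirmed by the paper's example where $\tto_3(\tau)=2$ via the patterns $52\!-\!3$ and $71\!-\!3$. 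As written, your forward argument proves that a left-to-right maximum never serves as the $1$ of a $\312$ pattern, which is true but is not the statement $\tto_k(\sigma)=0$. The repair is immediate and is what the paper does: if $k=\sigma_j$ were the $2$ of some pattern $(i,\ell)$, then $\ell=j$ and $\sigma_i>\sigma_j=k$ with $i<j$, i.e., a value larger than $k$ to its left, contradicting that $k$ is a left-to-right maximum.
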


\begin{proof}[Proof of Lemma \ref{equivLRM}]
  If $k$ is a left-to-right maximum of $\sigma$, we necessarily have
  $\sigma_{j-1} < \sigma_j$ and there are no values greater than $k$
  to its left. So $\tto_k(\sigma)=0$.

  Conversely, assume there exists $i < j$ such that
  $\sigma_i>\sigma_j$ and let $i$ be the rightmost position such that
  $\sigma_i>\sigma_j$  with $i < j$. Then
  $\sigma_i>\sigma_{i+1}<\sigma_j$
  and this is a $\312$ pattern where $k$ stands for the $2$.
\end{proof}

With this lemma we are now able to follow what happens to the
left-to-right maxima through the various bijections.

\begin{proof}[Proof of Proposition~\ref{LRM}]
  Let $\sigma$ be a permutation and $k$ a value of $\sigma$.
  Lemma~\ref{equivLRM} shows that if we apply the Fran\c con-Viennot
  bijection, the value $k$ is a
  right to left maximum if and only if $D_{2k}=\backslash$ and $w_k=0$
  where $(D,w)=\psi_{FV}(\sigma)$. Thanks to Lemma~\ref{otherDefPsi}
  the value $k$ is a right to left maximum if and only if
  $D^0_{2k+1}=\backslash$ and $w^0_k= 0$ where
  $(D^0,w^0)=\psi\circ\psi_{FV}(\sigma)$. Moreover, let $(u, w)$ be
  the nondecreasing weighted subexcedent function obtained after
  applying $\phi_2^{-1}$ to $(D^0, w^0)$. Then $k$ is a right to left
  maximum of $\sigma$ if and only if $u_k=0$ and $w_k=0$.

  Finally, we need to prove that a value at
  position $k$ in a subexcedent function $v$ is equal to zero if and only
  if the value at the same position after applying the sorting
  function $\phi_2$ is also zero with a null weight. In
  Algorithm~\ref{algoPhi1} one can see that no pivot shall exchange
  with a value equal to zero so the result at position $k$ shall also
  be zero with a null weight. Conversely, if the value at position $k$
  in $\phi_1(u)$ has a weight equal to zero, it means that the value
  never changed during the algorithm so the value is also equal to zero in
  $u$.
\end{proof}

\section{A variation on the statistics on permutations}
\label{sectionOtherConv}
\subsection{Another combinatorial interpretation of ${\goth M}^{(n)}(q)$}

In this part of the paper we change the convention on permutations
such that $\sigma_0 = \sigma_{n+1} = 0$. In the definition of the
Genocchi descent set, we compare each value to its successor so we
need to define a new statistic associated with this new convention.

\begin{definition}
 Let $\sigma \in \SG_n$. We define the \emph{Genocchi descent set of
   type 0} as
 \begin{equation}
  \GDes^0(\sigma) = \{\sigma_i~|~i \in \{1, \ldots,n\},~ \sigma_i>\sigma_{i+1}
  \}.
 \end{equation}
 Note that $\sigma_n$ and $n$ always belong to $\GDes^0(\sigma)$. We
 also define the \emph{Genocchi composition of descents of type 0}
 (denoted by $\GC^0$) as the composition of $n$ whose descent set is
 $\GDes^0(\sigma)\backslash\{n\}$.
\end{definition}

For example with $\sigma = 528971364$, we have $\GDes^0(\sigma) = \{
4,5,6,7,9\}$ and $\GC^0(\sigma) = (4,1,1,1,2)$.

The Fran\c con-Viennot bijection still exists with this convention and
builds a large Laguerre history. We call $\psi^0_{FV}$ the application
constructing a weighted Dyck path from a permutation with this
convention.
\begin{definition}
  Let $\sigma \in \SG_n$, define $\psi^0_{FV}(\sigma)$ as the path
  obtained by using Algorithm~\ref{algoFV} for the values $1$ to $n-1$
  in $\sigma$ and then adding an increasing step at the beginning of
  the path and a decreasing step at the end. The Algorithm builds a
  weight of size $n-1$, we add a $0$ at the end to obtain a weight of
  size $n$.
\end{definition}
Note that we do not need to consider $n$ in the construction as it is
always greater than both its neighbors with this convention.
An example is given in Figure~\ref{figFV0} with $\sigma =528971364$
alongside with $\psi_{FV}(\sigma)$.

\begin{figure}[h!t]
  \begin{center}
    \begin{tikzpicture}[scale=0.5]
      \node (F1) at (0,0) {
        \scalebox{0.3}{
          \begin{tikzpicture}
  \draw[dotted, thick, color=gray!60] (0, 0) grid (18, 6);
  \draw[rounded corners=1, color=black, line width=1] (0, 0) -- (1, 1)
  -- (2, 2) -- (3, 3) -- (4, 4) -- (5, 5) -- (6, 6) -- (7, 5) -- (8,
  4) -- (9, 5) -- (10, 4) -- (11, 3) -- (12, 2) -- (13, 1) -- (14, 0)
  -- (15, 1) -- (16, 2) -- (17, 1) -- (18, 0);

  \draw[dotted, very thick] (2, 0) -- (2, 6); \draw[dotted, very
    thick] (4, 0) -- (4, 6); \draw[dotted, very thick] (6, 0) -- (6,
  6); \draw[dotted, very thick] (8, 0) -- (8, 6); \draw[dotted, very
    thick] (10, 0) -- (10, 6); \draw[dotted, very thick] (12, 0) --
  (12, 6); \draw[dotted, very thick] (14, 0) -- (14, 6); \draw[dotted,
    very thick] (16, 0) -- (16, 6);
  
  \node (pond1) at (5, 5.5) {2}; \node (pond2) at (7, 5.5) {2}; \node
  (pond2) at (11, 3.5) {1};
\end{tikzpicture}
 }};
      \node (F2) at (12,0) {
        \scalebox{0.3}{
          \begin{tikzpicture}
  \draw[dotted, thick, color=gray!60] (0, 0) grid (18, 6);
  \draw[rounded corners=1, color=black, line width=1] (0, 0) -- (1, 1)
  -- (2, 2) -- (3, 3) -- (4, 4) -- (5, 5) -- (6, 4) -- (7, 5) -- (8,
  6) -- (9, 5) -- (10, 4) -- (11, 3) -- (12, 2) -- (13, 1) -- (14, 2)
  -- (15, 1) -- (16, 2) -- (17, 1) -- (18, 0);

  \draw[dotted, very thick] (2, 0) -- (2, 6); \draw[dotted, very
    thick] (4, 0) -- (4, 6); \draw[dotted, very thick] (6, 0) -- (6,
  6); \draw[dotted, very thick] (8, 0) -- (8, 6); \draw[dotted, very
    thick] (10, 0) -- (10, 6); \draw[dotted, very thick] (12, 0) --
  (12, 6); \draw[dotted, very thick] (14, 0) -- (14, 6); \draw[dotted,
    very thick] (16, 0) -- (16, 6);
  
  \node (pond1) at (5, 5.5) {2}; \node (pond2) at (7, 5.5) {2}; \node
  (pond2) at (11, 3.5) {1};
\end{tikzpicture}
 }};
    \end{tikzpicture}
    \caption{An example of $\psi^0_{FV}$ on the left and $\psi_{FV}$
      on the right with $\sigma = 528971364$.}
    \label{figFV0}
  \end{center}
\end{figure}
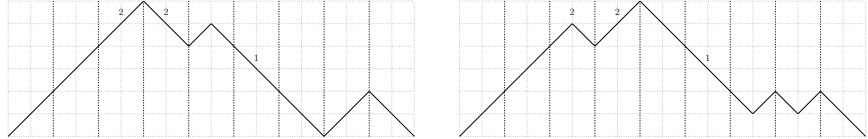

The inverse map is built as follows. Apply $\psi_{FV}^{-1}$ to
the path obtained after removing the first and last steps. At the end,
instead of removing the last $\circ$ in the obtained word, replace it
by $n$.

\begin{proposition}
 Let $\sigma \in \SG_n$, we have:
 \begin{itemize}
  \item $\GC^0(\sigma) = \GC^0(\psi_{FV}^0(\sigma))$;
  \item $\Rec(\sigma) = \CDes^0(\psi_{FV}^0(\sigma))$;
  \item $\tto(\sigma) = \tw(\psi_{FV}^0(\sigma))$.
 \end{itemize}
\end{proposition}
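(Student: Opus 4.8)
The plan is to reduce this proposition to the properties already established for $\psi_{FV}$ in Proposition~\ref{lemmaStatsFV}, by carefully tracking how the change of convention $\sigma_{n+1}=0$ (as opposed to $\sigma_{n+1}=n+1$) and the bookkeeping steps of $\psi_{FV}^0$ (running Algorithm~\ref{algoFV} only on the values $1,\dots,n-1$, then padding the Dyck path by $/$ at the front and $\backslash$ at the end, and padding the weight by a trailing $0$) affect each of the three statistics. First I would make precise the relationship between $\psi_{FV}^0(\sigma)$ and $\psi_{FV}$ applied to a related permutation: deleting the value $n$ from $\sigma$ (which is always a left-to-right maximum and, under the new convention, always satisfies $\sigma_i > \sigma_{i+1}$ at its position) leaves a permutation $\sigma'$ of $\{1,\dots,n-1\}$; running Algorithm~\ref{algoFV} on $1,\dots,n-1$ produces a weighted Dyck path of size $n-1$, and prepending $/$, appending $\backslash$, and appending a $0$ to the weight yields a weighted Dyck path of size $n$. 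The key observation is that the extra outermost $/\cdots\backslash$ pair is exactly the pair of steps $D_1, D_{2n}$ that would have been produced for the value $n$ by Algorithm~\ref{algoFV} under the \emph{old} convention, so morally $\psi_{FV}^0(\sigma)$ and $\psi_{FV}(\sigma)$ differ only in how the two steps attached to the top value $n$ are placed — and even that difference is cosmetic for the statistics we care about.

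Next I would check the three statistics one at a time, reusing Proposition~\ref{lemmaStatsFV}. For $\tto$ versus $\tw$: the value $n$ never plays the role of the $1$ in a $31\text{-}2$ pattern (nothing exceeds it) and never plays the role of the $2$ (so $\tto_n(\sigma)=0$, matching the trailing $0$ we append to the weight), hence the multiset of $31\text{-}2$ patterns of $\sigma$ equals that of $\sigma'$, and $w_k = \tto_k(\sigma)$ for all $k \le n-1$ exactly as in Algorithm~\ref{algoFV}; summing gives $\tw(\psi_{FV}^0(\sigma)) = \tto(\sigma)$. For $\GC^0$: by the definition of $\GC^0$ on weighted Dyck paths, $i \in \GDes^0(D,w)$ iff $D_{2i} = \backslash$; by Algorithm~\ref{algoFV} applied to value $i = \sigma_j \le n-1$, $D_{2i} = \backslash$ iff $\sigma_{j-1} < \sigma_j$, which under the convention $\sigma_0 = 0$ is equivalent to saying $i$ is \emph{not} of the form making $\sigma$ descend into it from the left — and one checks this matches precisely the condition $\sigma_i > \sigma_{i+1}$ defining $\GDes^0(\sigma)$ after renaming positions by values via the inverse permutation, with the value $n$ contributing the forced element $n$ on both sides (which is removed in passing to $\GC^0$). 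For $\Rec$ versus $\CDes^0$: here I would invoke the computation already done in the proof of Proposition~\ref{lemmaStatsFV} for $\CDes(D,w) = \Rec(\sigma)$ together with Lemma~\ref{otherDefPsi}-style reindexing, noting that $\CDes^0$ uses $D_{2i+1}$ where $\CDes$ used $D_{2i}$, and the shift in indices produced by prepending the extra $/$ step is exactly what aligns $\CDes^0$ of the padded path with $\Rec(\sigma)$.

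The main obstacle, and the part deserving genuine care rather than hand-waving, is the $\Rec$/$\CDes^0$ equality: one must verify that the index shift coming from the prepended $/$ step and the appended $\backslash$ step lines up correctly with the definition $\Des^0(D,w) = \{i \mid w_i > w_{i+1}\} \cup \{i \mid w_i = w_{i+1},\ D_{2i+1} = /\}$, and in particular that the behaviour at the boundary (position $n-1$, the last value before $n$) is handled by the appended $0$ in the weight. Concretely I would argue, as in the proof of Proposition~\ref{lemmaStatsFV}, via $(\psi_{FV}^0)^{-1}$ as described just above the proposition: the permutation is rebuilt by Algorithm~\ref{algoInverseFV} on the truncated path and then the final $\circ$ is replaced by $n$; for $k \in \{1,\dots,n-2\}$ the recoil condition at $k$ is governed by comparing $w_k$ and $w_{k+1}$ and, in case of equality, by whether a new $\circ$ sits to the left of $k$, which translates into the step $D_{2k+1}$ (in the padded, shifted indexing) being $/$; the only new case, $k = n-1$, is forced because $n$ is placed last and $n-1$ either precedes it or not according to the step attached to $n-1$, again matching $D_{2(n-1)+1}$. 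Once these three checks are assembled the proposition follows immediately, so the proof is short modulo this indexing bookkeeping.
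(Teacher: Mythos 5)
Your overall strategy---reduce each bullet to Proposition~\ref{lemmaStatsFV} and track the shift of one in the step indices caused by the prepended $/$---is exactly the paper's proof, which is stated in a single sentence. However, two of the concrete reductions you build this on would fail as written.

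First, the identification of $\psi_{FV}^0(\sigma)$ with a padded $\psi_{FV}(\sigma')$, where $\sigma'$ is $\sigma$ with the value $n$ deleted, is false. Inside $\psi_{FV}^0$, Algorithm~\ref{algoFV} is run on $\sigma$ itself (with $\sigma_0=\sigma_{n+1}=0$), merely restricted to the values $1,\dots,n-1$: the comparisons use the neighbors in $\sigma$, not in $\sigma'$. For a value $k=\sigma_j$ with $\sigma_{j+1}=n$ the test $\sigma_j<\sigma_{j+1}$ always succeeds in $\sigma$ but may fail in $\sigma'$, and for the last value of $\sigma$ the right neighbor is $0$ in the new convention but $n$ (after relabeling) in the old one. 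In the paper's own example $\sigma=528971364$, the steps attached to the values $8$ and $4$ already differ between $\psi_{FV}^0(\sigma)$ and the padded $\psi_{FV}(\sigma')$. Likewise the claim that the multiset of $31\!-\!2$ patterns of $\sigma$ equals that of $\sigma'$ is false: $\sigma=1423$ has the pattern $42\!-\!3$ while $\sigma'=123$ has none. What actually saves the $\tw$ bullet is only the direct statement $w_k=\tto_k(\sigma)$ for $k\le n-1$ together with $\tto_n(\sigma)=0$, which you do also assert; the detour through $\sigma'$ should be deleted rather than made precise.

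Second, your $\GC^0$ check compares the wrong neighbor. In the padded path the step $D_{2i}$ is the step $D'_{2i-1}$ of the path produced by Algorithm~\ref{algoFV}, and by the first bullet of that algorithm $D'_{2i-1}=\backslash$ iff $\sigma_j>\sigma_{j+1}$ where $i=\sigma_j$, i.e.\ iff $i$ is a value followed by a smaller value---which is the definition of $\GDes^0(\sigma)$ on the nose, with $n$ excluded on both sides. You instead quote the unshifted rule $D_{2i}=\backslash$ iff $\sigma_{j-1}<\sigma_j$, a condition on the \emph{left} neighbor; this is precisely the one place where the index shift you emphasize elsewhere had to be applied, and no renaming of positions by values can reconcile a left-neighbor condition with a right-neighbor one, so the deferred ``one checks this matches'' cannot be completed. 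By contrast, the $\Rec$ versus $\CDes^0$ verification, the part you single out as delicate, is handled correctly, including the boundary case $k=n-1$.
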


The proof of this statement is the same as the
one of Proposition~\ref{lemmaStatsFV}, the only difference being that
we have to add one to the index of the steps in the path as we add an
extra step at the beginning when applying the Fran\c con-Viennot
bijection with this convention.

From this proposition we derive the following proposition.

\begin{corollary}
 Let $I$ and $J$ be two compositions of $n$, we have:
 \begin{equation}
  \sum_{\Rec(\sigma) = I \atop \GC^0(\sigma) = J} q^{\tto(\sigma)} =
  \sum_{\Rec(\sigma) = I \atop \GC(\sigma) = J} q^{\tto(\sigma)}.
 \end{equation}
\end{corollary}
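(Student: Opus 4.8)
The plan is to deduce this corollary from the global equidistribution already available through the two Françon--Viennot variants, using the involution $\psi$ as the bridge. Concretely, both maps $\psi_{FV}$ and $\psi_{FV}^0$ are bijections onto weighted Dyck paths (of the appropriate size), and the involution $\psi$ is a bijection on weighted Dyck paths. So the composition
\[
  \Theta = (\psi_{FV}^0)^{-1} \circ \psi \circ \psi_{FV}
\]
is a bijection from $\SG_n$ to $\SG_n$, and the strategy is to check that it sends the triple $(\Rec, \GC, \tto)$ to the triple $(\Rec, \GC^0, \tto)$, which immediately yields the claimed identity of generating polynomials.

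First I would set $(D,w) = \psi_{FV}(\sigma)$ and $(D^0,w^0) = \psi(D,w)$. By Proposition~\ref{lemmaStatsFV} we have $\GC(D,w) = \GC(\sigma)$, $\CDes(D,w) = \Rec(\sigma)$, and $\tw(D,w) = \tto(\sigma)$. By Proposition~\ref{lemmaStatsInvol} these become $\GC^0(D^0,w^0) = \GC(\sigma)$, $\CDes^0(D^0,w^0) = \Rec(\sigma)$, and $\tw(D^0,w^0) = \tto(\sigma)$. Then I would apply $(\psi_{FV}^0)^{-1}$: by the proposition immediately preceding the corollary (the ``type 0'' analog of Proposition~\ref{lemmaStatsFV}), the permutation $\sigma' = (\psi_{FV}^0)^{-1}(D^0,w^0)$ satisfies $\GC^0(\sigma') = \GC^0(D^0,w^0)$, $\Rec(\sigma') = \CDes^0(D^0,w^0)$, and $\tto(\sigma') = \tw(D^0,w^0)$. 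Chaining these gives $\GC^0(\sigma') = \GC(\sigma)$, $\Rec(\sigma') = \Rec(\sigma)$, and $\tto(\sigma') = \tto(\sigma)$, so $\sigma \mapsto \sigma'$ is the desired bijection.

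The one subtlety to check carefully is the size bookkeeping: $\psi_{FV}$ sends $\SG_n$ to weighted Dyck paths of size $n$, whereas $\psi_{FV}^0$ is defined by running Algorithm~\ref{algoFV} only on the values $1,\dots,n-1$ and then padding with an extra up step at the front, a down step at the end, and a $0$ in the weight --- producing a weighted Dyck path of size $n$ as well (the underlying large Laguerre history has size $n-1$). So both images live in the same set of weighted Dyck paths of size $n$, and $\psi$ is well-defined on it; this is exactly what makes the composition legitimate. I would also note that the extra padding steps $D_1 = /$ and $D_{2n} = \backslash$ are fixed by $\psi$ (since $\psi$ only swaps mismatched pairs at positions $2i, 2i+1$ and the boundary steps have no partner to swap with of the right type), so there is no conflict at the ends. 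This is the main --- and essentially the only --- obstacle; once the objects are confirmed to match up, the statistic identities are purely formal consequences of results already proved.

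Since the right-hand side of the corollary is precisely $\sum_{\Rec(\sigma)=I,\ \GC(\sigma)=J} q^{\tto(\sigma)}$ and the left-hand side is the same sum with $\GC$ replaced by $\GC^0$, the bijection $\sigma \mapsto \sigma'$ constructed above --- which preserves $\Rec$ and $\tto$ while exchanging $\GC$ for $\GC^0$ --- finishes the proof. One could alternatively phrase this without reference to permutations at all, purely as: summing $q^{\tw}$ over weighted Dyck paths of size $n$ with fixed $\CDes$ and fixed $\GC$ equals the same sum with $\CDes^0$ and $\GC^0$, via the involution $\psi$; but routing through the two Françon--Viennot maps keeps the statement in the form stated in the corollary.
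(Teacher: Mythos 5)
Your proposal is correct and is essentially the paper's own argument: the paper proves this corollary by the same chaining of Propositions~\ref{lemmaStatsFV} and~\ref{lemmaStatsInvol} with the type-0 analog of Proposition~\ref{lemmaStatsFV}, applied to the bijection $(\psi_{FV}^0)^{-1}\circ\psi\circ\psi_{FV}$. Your extra bookkeeping that both Fran\c con--Viennot variants land in the same set of weighted Dyck paths of size $n$ is a sound (and welcome) verification that the paper leaves implicit.
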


The proof of this statement is based on the same approach as the
proof of Theorem~\ref{globalTh}, using the bijection
$\Psi=(\psi_{FV}^0)^{-1} \circ \psi \circ \psi_{FV}$.

This result gives another combinatorial interpretation of the
entries of the transition matrices described previously. It also
implies another way of refining the PASEP in terms of statistics close
to the usual one used to describe its combinatorial behavior.

\subsection{Binary search trees}

By construction, $\Psi$ preserves the recoil classes (that is the
permutations having the same recoils composition). It happens that
$\Psi$ preserves smaller sets known as co-sylvester classes
of permutations, which have been described in~\cite{PBT} with
sylvester classes. A co-sylvester class is represented by a binary
search tree.

\begin{definition}
  A \emph{binary search tree} is a labeled binary tree such that for each
  node, all the values in the left (resp. right) subtree are smaller
  (resp. greater) than the value at the node.
\end{definition}

\begin{definition}
  The \emph{restriction} of a binary search tree to an interval is
  the tree obtained after removing the values that are not in the
  interval then erasing the subtrees with no nodes and merging the edges
  with no values between them.
\end{definition}

An example of restriction of a binary tree to an interval is given in
Figure~\ref{restriction}.

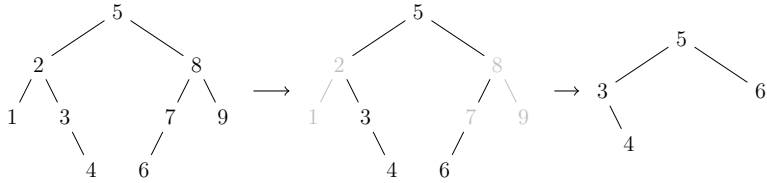
\begin{figure}[h!t]
  \begin{center}
    \begin{tikzpicture}
      \node (F1) at (6.3,0) {
          \begin{tikzpicture}[scale=1]
  \node (T1) at (0,0) {\scalebox{0.7}{\begin{tikzpicture}[scale=1]
  \node(N5) at (0,0){5};
  \node(N2) at (-1.5,-1){2};
  \node(N8) at (1.5,-1){8};
  \node(N1) at (-2,-2){1};
  \node(N3) at (-1,-2){3};
  \node(N7) at (1,-2){7};
  \node(N9) at (2,-2){9};
  \node(N4) at (-0.5,-3){4};
  \node(N6) at (0.5,-3){6};
  
  \draw (N5) -- (N2);
  \draw (N5) -- (N8);
  \draw (N1) -- (N2);
  \draw (N3) -- (N2);
  \draw (N3) -- (N4);
  \draw (N7) -- (N8);
  \draw (N8) -- (N9);
  \draw (N6) -- (N7);
\end{tikzpicture}
 }};
  \node (T2) at (4,0) {\scalebox{0.7}{\begin{tikzpicture}[scale=1]
  \node(N5) at (0,0){5};
  \node(N2) at (-1.5,-1){\color{gray!50}2};
  \node(N8) at (1.5,-1){\color{gray!50}8};
  \node(N1) at (-2,-2){\color{gray!50}1};
  \node(N3) at (-1,-2){3};
  \node(N7) at (1,-2){\color{gray!50}7};
  \node(N9) at (2,-2){\color{gray!50}9};
  \node(N4) at (-0.5,-3){4};
  \node(N6) at (0.5,-3){6};
  
  \draw (N5) -- (N2);
  \draw (N5) -- (N8);
  \draw[color=gray!50] (N1) -- (N2);
  \draw (N3) -- (N2);
  \draw (N3) -- (N4);
  \draw (N7) -- (N8);
  \draw[color=gray!50] (N8) -- (N9);
  \draw (N6) -- (N7);
\end{tikzpicture}
 }};
  \node (T3) at (7.5,0) {\scalebox{0.7}{\begin{tikzpicture}[scale=1]
  \node(N5) at (0,0){5};
  \node(N2) at (-1.5,-1){3};
  \node(N8) at (1.5,-1){6};
  \node(N3) at (-1,-2){4};
  
  \draw (N5) -- (N2);
  \draw (N5) -- (N8);
  \draw (N3) -- (N2);
\end{tikzpicture}
 }};

  \draw[->] (T1) -- (T2);
  \draw[->] (T2) -- (T3);
\end{tikzpicture}
 };
    \end{tikzpicture}
    \caption{Restriction of a binary search tree to the interval
      $\{3, 4, 5, 6\}$.}
    \label{restriction}
  \end{center}
\end{figure}

By analogy, the restriction of a permutation to an interval
is the word obtained by removing the values that are not in the
interval. For example, $528971364_{|_{\{3,4,5,6\}}}=5364$.

We associate a binary search tree with a permutation through
the following algorithm.

\begin{algorithm}
  \label{binaryPerm}
  At the beginning set $I= \{1,2,\cdots,n\}$. The root of the current
  tree is the value $k$ of $I$ such that $k$ is the first value of
  $\sigma_{|_I}$. Apply this
  algorithm to the interval of values smaller than $k$ in $I$ to
  obtain the left subtree and to the values greater than $k$ in $I$
  for the right subtree.
\end{algorithm}
Denote by $\BST(\sigma)$ the map associated with this algorithm.

An example of this algorithm is given in
Figure~\ref{figBinaryTree}. In $\sigma = 528971364$ the first value is
$5$. The left subtree is built with $\sigma_{|_{\{1,2,3,4\}}} = 2134$
and the right subtree is built with $\sigma_{|_{\{6,7,8,9\}}} = 8976$.

\begin{remark}
  The classical way to build the tree is to read the permutation from
  left to right and add each value to its only possible position as a
  leaf in the binary search tree. So with $\sigma = 528971364$, start
  with a $5$, then $2$ must be to the left of $5$, $8$ to the right of
  $5$, $9$ to the right of $5$ and of $8$, and so on. In the rest of
  the paper we shall only need the description of
  Algorithm~\ref{binaryPerm}.
\end{remark}

\begin{figure}[h!t]
  \begin{center}
    \begin{tikzpicture}
      \node (F1) at (0,0) {
          \begin{tikzpicture}[scale=1]
  \node(N5) at (0,0){5};
  \node(N2) at (-1.5,-1){2};
  \node(N8) at (1.5,-1){8};
  \node(N1) at (-2,-2){1};
  \node(N3) at (-1,-2){3};
  \node(N7) at (1,-2){7};
  \node(N9) at (2,-2){9};
  \node(N4) at (-0.5,-3){4};
  \node(N6) at (0.5,-3){6};
  
  \draw (N5) -- (N2);
  \draw (N5) -- (N8);
  \draw (N1) -- (N2);
  \draw (N3) -- (N2);
  \draw (N3) -- (N4);
  \draw (N7) -- (N8);
  \draw (N8) -- (N9);
  \draw (N6) -- (N7);
\end{tikzpicture}
 };
    \end{tikzpicture}
    \caption{The binary search tree associated with $\sigma = 528971364$.}
    \label{figBinaryTree}
  \end{center}
\end{figure}
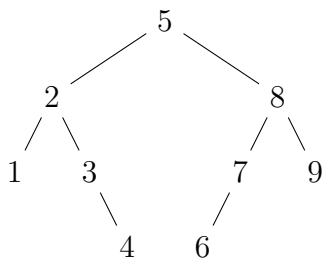

We say that two permutations $\sigma$ and $\tau$ belong to the same
co-sylvester class if their associated binary search trees are
equal.

To prove the equality of binary search trees, it shall be 
convenient to use the description of the following lemma.

\begin{lemma}
  \label{lemBinarySearch}
  Let $A$ and $A'$ be two binary search trees. We have $A = A'$ if and
  only if they have the same root in their restriction to any
  interval.
\end{lemma}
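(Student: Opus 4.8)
The plan is to prove both directions of the equivalence. The forward direction is immediate: if $A = A'$ then their restrictions to any interval of values coincide, hence in particular have the same root. The content is in the converse, so I will assume that for every interval $[a,b] \subseteq [1,n]$ the binary search trees $A$ and $A'$ restricted to the values in $[a,b]$ have the same root, and deduce $A = A'$.

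First I would recall that a binary search tree on a set of values is completely determined by the function sending each interval of values to its root: indeed, the root of the whole tree is the root of the restriction to $[1,n]$, and once the root $k$ is known, the left subtree is the binary search tree on the values less than $k$ (which is again an interval, since we restrict to an interval and cut at $k$) and the right subtree is the one on the values greater than $k$; recursing, every node of the tree arises as the root of the restriction to a suitable interval. So I would argue by induction on the size of the interval: for a singleton interval both trees are the single node with that value; for an interval $[a,b]$ of size $\geq 2$, the hypothesis gives that $A$ and $A'$ restricted to $[a,b]$ have the same root $k$, and then the left parts are the restrictions to $[a,k-1]$ and the right parts the restrictions to $[k+1,b]$, both strictly smaller intervals to which the induction hypothesis applies (the hypothesis that roots agree on \emph{any} interval is inherited by subintervals), so the two restricted trees are equal. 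Taking $[a,b] = [1,n]$ gives $A = A'$.

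The only subtlety — and the step I expect to need the most care — is checking that the restriction of a binary search tree to an interval of values is again a binary search tree whose root is governed by the same rule, so that ``same root on every interval'' is a coherent hypothesis to recurse on. Concretely, if $A$ is the binary search tree built from a permutation $\sigma$ by Algorithm~\ref{binaryPerm}, then its restriction to an interval $[a,b]$ is exactly the binary search tree built by the same algorithm from $\sigma$ restricted to the values in $[a,b]$: this is essentially the definition of the algorithm, which already proceeds by restricting to subintervals of values. Once this is noted, the recursion closes and the lemma follows. I would keep the write-up short, emphasizing the induction on interval length and the observation that the subtrees of the restriction to $[a,b]$ are precisely the restrictions to $[a,k-1]$ and $[k+1,b]$.
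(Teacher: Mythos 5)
Your proposal is correct and follows essentially the same route as the paper's own proof: both establish the converse by noting that the full interval gives a common root and then recursing on the intervals of values below and above that root, which are exactly the two subtrees. The extra care you take in checking that restriction to an interval is coherent is a reasonable addition but does not change the argument.
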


\begin{proof}[Proof of Lemma \ref{lemBinarySearch}]
  If $A = A'$, the property is clearly true. Let $A$ and $A'$ be two
  binary search trees such that for any interval $I$ they give the same
  root when restricted to $I$. In particular $A$ and $A'$ have the same root (if
  we take $I$ to be the interval of all values in them). The proof of
  the lemma comes by induction by applying the property on the
  intervals of values smaller than the root and greater than the root
  which gives the two subtrees of the root.
\end{proof}

\subsection{A co-sylvester class preserving bijection}

Our aim is to prove the following theorem with
$\Psi=(\psi_{FV}^0)^{-1} \circ \psi \circ \psi_{FV}$.
\begin{theorem}
  \label{thmSylvester}
  Let $\sigma$ be a permutation. Then $\sigma$ and $\Psi(\sigma)$ belong
  to the same co-sylvester class, \ie, have the same binary search tree.
\end{theorem}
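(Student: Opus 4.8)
The plan is to use Lemma~\ref{lemBinarySearch}. Since a co-sylvester class is precisely a binary search tree, it suffices to prove that for every interval $[a,b]\subseteq[n]$ the binary search trees of $\sigma$ and of $\Psi(\sigma)$, restricted to $[a,b]$, have the same root. By Algorithm~\ref{binaryPerm} that root is just the value of $[a,b]$ occurring leftmost in the word, so I would first reformulate the statement combinatorially: the leftmost value of $[a,b]$ in a permutation $\pi$ is the unique $k\in[a,b]$ which is simultaneously a left-to-right maximum of the subword $\pi|_{[1,b]}$ (no value of $(k,b]$ lies to its left) and a left-to-right minimum of the subword $\pi|_{[a,n]}$ (no value of $[a,k-1]$ lies to its left). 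The point of this reformulation is that left-to-right maximality here depends only on $b$ and left-to-right minimality only on $a$; hence it is enough to show that for every $b$, the map $\Psi$ preserves the set of values that are left-to-right maxima of $\sigma|_{[1,b]}$, and, dually, that for every $a$ it preserves the left-to-right minima of $\sigma|_{[a,n]}$.

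I would establish the first of these by transporting the condition ``$k$ is a left-to-right maximum of $\sigma|_{[1,b]}$'' along the three bijections composing $\Psi=(\psi_{FV}^0)^{-1}\circ\psi\circ\psi_{FV}$, following the template of the proof of Proposition~\ref{LRM}. The starting point is a relative version of Lemma~\ref{equivLRM}: $k=\sigma_j$ is a left-to-right maximum of $\sigma|_{[1,b]}$ iff the predecessor of $k$ in $\sigma|_{[1,b]}$ is smaller than $k$ and no $\312$ pattern of $\sigma|_{[1,b]}$ has $k$ in the role of the~$2$. One then shows this translates, on $(D,w)=\psi_{FV}(\sigma)$, into a condition involving only the weight $w$ and the even-indexed steps $D_{2m}$ for $m$ in an initial segment determined by $b$ --- exactly the data that the involution $\psi$ relocates predictably, since $D_{2m}=\psi(D)_{2m+1}$ by Lemma~\ref{otherDefPsi}. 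Pushing the condition through $\psi$ and then reading it back with $\psi_{FV}^0$, which (because of the two extra boundary steps) attributes the steps $D^0_{2k},D^0_{2k+1}$ to the value $k$ rather than $D^0_{2k-1},D^0_{2k}$, the same set of values comes out as left-to-right maxima of $\Psi(\sigma)|_{[1,b]}$. The mirror argument handles the left-to-right minima of $\sigma|_{[a,n]}$, and then Lemma~\ref{lemBinarySearch} closes the proof.

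The step I expect to be the main obstacle is the Dyck-path description in the middle: the weight $w_k$ records the full number $\tto_k(\sigma)$ of $\312$ patterns, whereas ``left-to-right maximum of $\sigma|_{[1,b]}$'' only concerns the $\312$ patterns confined to values $\le b$, whose count is in general strictly smaller than $w_k$. Isolating the portion of the weight that the truncation to $[1,b]$ ``sees'', and checking that the resulting condition is genuinely insensitive to the odd-indexed steps that $\psi$ scrambles, is the delicate bookkeeping here.

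An alternative, which I would pursue in parallel, is to describe $\Psi$ explicitly. The evidence from small cases is that $\Psi(\sigma)$ is obtained from $\sigma$ by relocating the value $n$ to the position immediately after $M=\max\{\sigma_1,\dots,\sigma_{p-1}\}$, where $p$ is the position of $n$ in $\sigma$ (with $\Psi(\sigma)=\sigma$ when $p=1$); proving this would amount to unwinding Algorithms~\ref{algoFV} and~\ref{algoInverseFV} together with the definition of $\psi$. Granting it, the theorem is immediate: that relocation is a composition of adjacent transpositions, each exchanging $n$ with some $\sigma_i<M$, and each such swap preserves the binary search tree because $M$ lies strictly between $\sigma_i$ and $n$ and occurs earlier in the word (this is the elementary co-sylvester move, as one checks from Algorithm~\ref{binaryPerm}); hence $\sigma$ and $\Psi(\sigma)$ lie in the same co-sylvester class. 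Since making this explicit description rigorous costs about as much as the bookkeeping above, I would commit to whichever of the two inductions --- on $b$, or on the number of swaps --- turns out to be cleaner.
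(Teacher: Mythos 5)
Your first route starts the same way the paper does --- reduce to comparing the roots of all interval restrictions via Lemma~\ref{lemBinarySearch} --- but it stops exactly where the real work begins. The crux is to characterize, directly on the weighted Dyck path $(D,w)=\psi_{FV}(\sigma)$, which $k\in I$ is the leftmost value of an interval $I$ in $\sigma$; your plan of ``isolating the portion of the weight that the truncation to $[1,b]$ sees'' points in the wrong direction, because the restricted pattern count $\tto_k(\sigma|_{[1,b]})$ has no clean local expression in $(D,w)$. The paper sidesteps restricted counts entirely: its Lemma~\ref{lemma312} shows that any $\312$ pattern having the leftmost value of $I$ as its $2$ forces a $\312$ pattern (with the $1$ further to the right) for every other value of $I$, so the leftmost value of $I$ is detected by \emph{minimality of the full weight $w$ within $I$}, with ties broken by $D_{2k}$ and by position (Algorithm~\ref{DyckSylvester} and Lemma~\ref{equalTrees}). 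That comparative statement --- not a computation of truncated weights --- is the missing idea; without it your ``delicate bookkeeping'' paragraph is an acknowledged gap rather than a proof. Your reduction to preserving, for each $b$, the left-to-right maxima of $\sigma|_{[1,b]}$ and, for each $a$, the left-to-right minima of $\sigma|_{[a,n]}$ is correct and would go through once that characterization is in hand (the two families correspond to the two tie-breaking clauses of Algorithm~\ref{DyckSylvester}), as would the subsequent transport through $\psi$ via Lemma~\ref{otherDefPsi}.

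The fallback you offer rests on a false description of $\Psi$. Take $\sigma=231$: one computes $\psi_{FV}(231)=(///\backslash\backslash\backslash,(0,0,0))$, which is fixed by $\psi$ since the interior pairs are $//$ and $\backslash\backslash$, and $(\psi_{FV}^0)^{-1}$ then yields $\Psi(231)=213$. Your rule (here $p=2$ and $M=2$) predicts $\Psi(231)=231$; it is moreover incompatible with injectivity, since the same rule gives $\Psi(213)=231$ as well (and that value is in fact correct). The permutations $231$ and $213$ do share a binary search tree, so the theorem survives, but the ``immediate'' proof via adjacent transpositions below $M$ collapses because $\Psi$ is not the relocation operation you conjecture. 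So neither branch of the proposal closes the argument as written.
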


In order to prove this theorem, we need to build binary search trees
associated with weighted Dyck paths corresponding to the one obtained
from a permutation before applying $\psi_{FV}$ and $\psi_{FV}^0$. Let
us start with the algorithm associated with $\psi_{FV}$. Its main idea
came after discussing with Aval, Boussicault, and Zubieta~\cite{ABZ}.

\begin{algorithm}
  \label{DyckSylvester}
  Start with $I = \{1, 2, \cdots, n\}$.
  \begin{itemize}
  \item Among the $i\in I$ such that $w_i$ is minimal, the root $r$ of
    the current tree is equal to the smallest $i$ such that
    $D_{2i} = \backslash$.
  \item If there are no such positions, let $r$ be the maximal value
    of $I$ among the ones of minimal weight.
  \end{itemize}
  Apply
  this algorithm to the interval of values smaller than $r$ in $I$ to
  obtain the left subtree and to the values greater than $r$ for the
  right subtree.
\end{algorithm}
Denote by $\BST(D, w)$ the map associated with this algorithm.

An example of a Dyck path and its corresponding tree is given in
Figure~\ref{treeDyck}. At the first step, the $i$ of minimum weight
are $\{1, 2, 5, 7, 8, 9\}$. Among these, $5$ is the minimal
value such that $D_{2i}= \backslash$. To obtain the left subtree, we
apply the algorithm to $I=\{1, 2, 3, 4\}$ on the path. The minimal
weights are obtained for $i$ in $\{1, 2\}$ but there are no $i$ in this set such
that $D_{2i} = \backslash$ so~$r=2$. Apply
this algorithm respectively on $\{1\}$, $\{3, 4\}$, and $I=\{6,7,8,9\}$
to get the other parts of the tree.

\begin{figure}[h!t]
  \begin{center}
    \begin{tikzpicture}
      \node (F1) at (0,0) {
        \scalebox{0.4}{
          \begin{tikzpicture}
  \draw[dotted, thick, color=gray!60] (0, 0) grid (18, 6);
  \draw[rounded corners=1, color=black, line width=1] (0, 0) -- (1, 1)
  -- (2, 2) -- (3, 3) -- (4, 4) -- (5, 5) -- (6, 4) -- (7, 5) -- (8,
  6) -- (9, 5) -- (10, 4) -- (11, 3) -- (12, 2) -- (13, 1) -- (14, 2)
  -- (15, 1) -- (16, 0) -- (17, 1) -- (18, 0);

  \draw[dotted, very thick] (2, 0) -- (2, 6); \draw[dotted, very
    thick] (4, 0) -- (4, 6); \draw[dotted, very thick] (6, 0) -- (6,
  6); \draw[dotted, very thick] (8, 0) -- (8, 6); \draw[dotted, very
    thick] (10, 0) -- (10, 6); \draw[dotted, very thick] (12, 0) --
  (12, 6); \draw[dotted, very thick] (14, 0) -- (14, 6); \draw[dotted,
    very thick] (16, 0) -- (16, 6);
  
  \node (pond1) at (5, 5.5) {2}; \node (pond2) at (7, 5.5) {2}; \node
  (pond2) at (11, 3.5) {1};
\end{tikzpicture}
 }};
      \node (F1) at (6.3,0) {
        \scalebox{0.8}{
          \begin{tikzpicture}[scale=1]
  \node(N5) at (0,0){5};
  \node(N2) at (-1.5,-1){2};
  \node(N8) at (1.5,-1){8};
  \node(N1) at (-2,-2){1};
  \node(N3) at (-1,-2){3};
  \node(N7) at (1,-2){7};
  \node(N9) at (2,-2){9};
  \node(N4) at (-0.5,-3){4};
  \node(N6) at (0.5,-3){6};
  
  \draw (N5) -- (N2);
  \draw (N5) -- (N8);
  \draw (N1) -- (N2);
  \draw (N3) -- (N2);
  \draw (N3) -- (N4);
  \draw (N7) -- (N8);
  \draw (N8) -- (N9);
  \draw (N6) -- (N7);
\end{tikzpicture}
 }};
    \end{tikzpicture}
    \caption{A weighted Dyck path and its associated binary search
      tree.} 
    \label{treeDyck}
  \end{center}
\end{figure}
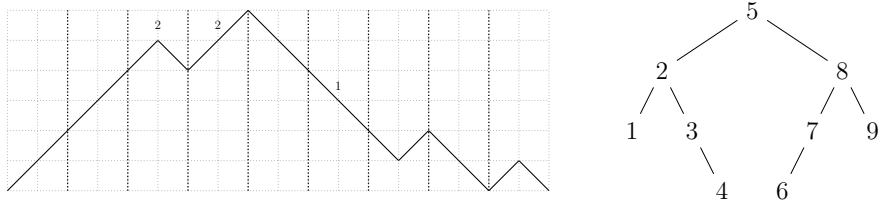

\begin{remark}
  Note that we could have applied this algorithm on any weighted path
  with $2n$ steps. It does not need to stay above the horizontal axis
  nor to end at position $(2n,0)$.
\end{remark}

\begin{lemma}
  \label{equalTrees}
  Let $\sigma$ be a permutation. Then
  $\BST(\sigma) = \BST(\psi_{FV}(\sigma))$.
\end{lemma}

In order to prove this lemma we need the following lemma.

\begin{lemma}
  \label{lemma312}
  Let $\sigma$ be a permutation and $i<j<k$ be such that
  $\sigma_j<\sigma_k<\sigma_i$ (\ie, they form a $3\!-\!1\!-\!2$ pattern), then
  $\sigma_k$ stands for the $2$ in a $\312$ pattern where the $1$ is to
  the right of $\sigma_i$.
\end{lemma}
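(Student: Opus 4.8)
The plan is to produce the required $\312$ pattern explicitly. Unpacking the definitions, I must find a position $a$ with $a+1<k$, with $a\ge i$ (so that the ``$1$'' of the pattern, which sits at position $a+1$, lies strictly to the right of position $i$), and with $\sigma_{a+1}<\sigma_k<\sigma_a$; then $(a,k)$ is a $\312$ pattern in which $\sigma_k$ plays the role of the ``$2$''. The idea is to hunt for the ``$3$'' and the ``$1$'' among the entries occupying the positions lying between $i$ (inclusive) and $j$.

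Concretely, I would first note that the set of positions $\{i,i+1,\dots,j-1\}$ is nonempty since $i<j$, and that it contains at least one position $p$ with $\sigma_p>\sigma_k$, namely $p=i$ (because $\sigma_i>\sigma_k$ by hypothesis). Let $a$ be the largest such position, so $i\le a\le j-1$. Next I would check that $\sigma_{a+1}<\sigma_k$: if $a+1\le j-1$ this follows from the maximality of $a$ together with the fact that the entries of $\sigma$ are pairwise distinct, and if $a+1=j$ it is exactly the hypothesis $\sigma_j<\sigma_k$. Combining this with $\sigma_a>\sigma_k$ gives $\sigma_{a+1}<\sigma_k<\sigma_a$. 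Finally, from $a\le j-1$ we get $a+1\le j<k$, hence $k>a+1$, and from $a\ge i$ we get $a+1>i$. Thus $(a,k)$ is a $\312$ pattern with $\sigma_k$ standing for the ``$2$'' and whose ``$1$'', namely $\sigma_{a+1}$, lies to the right of $\sigma_i$, which is the claim.

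I do not anticipate a genuine obstacle here: the proof is a one-shot choice of indices. The only points needing a little care are the small case split ($a+1=j$ versus $a+1<j$) when establishing $\sigma_{a+1}<\sigma_k$, bookkeeping about where distinctness of the entries is used to upgrade $\le$ to $<$, and the elementary observation that, since $i<j<k$ are legitimate indices of the finite word $\sigma$, the position $a+1\le j$ is always well defined, so no boundary cases occur at the ends of $\sigma$.
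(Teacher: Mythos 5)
Your proof is correct and is essentially the paper's own argument: the paper also takes the rightmost position $r$ to the left of $j$ with $\sigma_r>\sigma_k$ (which forces $r\ge i$ and $\sigma_{r+1}<\sigma_k<\sigma_r$), exactly your choice of $a$. Your write-up just makes explicit the case split $a+1=j$ versus $a+1<j$ and the inequality $a+1<k$ that the paper leaves implicit.
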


\begin{proof}[Proof of Lemma~\ref{lemma312}]
  Among the values greater than $\sigma_k$ on the left of~$\sigma_j$,
  let~$\sigma_r$ be the rightmost one. Necessarily,
  $\sigma_r>\sigma_k>\sigma_{r+1}$ and $r \geq i$ so it defines a
  $\312$ pattern of $\sigma$ to the right of $\sigma_i$ where
  $\sigma_k$ is the $2$.
\end{proof}

\begin{proof}[Proof of Lemma~\ref{equalTrees}]
  To prove, we use
  Lemma~\ref{lemBinarySearch}. Let $A=\BST(\sigma)$ and
  $A'=\BST(D,w))$, where $(D, w) =
  \psi_{FV}(\sigma)$. Let $I$ be an interval of values of $\sigma$.

  The root of $A_{|_I}$
  corresponds to the first value $\sigma_{|_I}$ since one easily sees
  that $A_{|_I}=\BST(\sigma_{|_I})$ (see an example on Figure~\ref{restriction}). We
  have to prove that this value corresponds to the position returned
  by Algorithm~\ref{DyckSylvester} for the same interval on $(D,w)$.
  In the rest of the proof, consider $\sigma = \cdots \sigma_i \cdots
  \sigma_j \cdots$ where $\sigma_i$ is the first value of $I$ in
  $\sigma$ and $\sigma_j$ is another value of $I$.

  In order to prove
  that we have to search a step of minimal weight, we need to prove
  that for any $\312$ pattern where $\sigma_i$ is the $2$, we have a
  $\312$ pattern where $\sigma_j$ is also the $2$. Indeed, for any
  $\312$ pattern where $\sigma_i$ is the $2$, the value representing
  the $3$ (resp. the $1$) is necessarily greater (resp. smaller) than
  all values in $I$ so $\sigma_j$ is also between them and so is
  involved as a $2$ in a $\312$ pattern.

  If $\sigma_i > \sigma_{i-1}$ (so $D_{2i}=\backslash$) and if
  $\sigma_{j-1}<\sigma_{j}$ with $\sigma_j < \sigma_{i}$ then applying
  Lemma~\ref{lemma312} proves that there is a $\312$ pattern to the
  right of $\sigma_i$ where
  $\sigma_j$ stands for the $2$ and so the weight associated with
  $\sigma_j$ in $(D, w)$ is strictly greater than the one associated
  with $\sigma_i$. This proves that if $D_{2i} = \backslash$, it is the
  leftmost decreasing step in even position of minimal weight in the
  interval $I$.

  If $\sigma_i < \sigma_{i-1}$ (so $D_{2i} = /$) and if
  $\sigma_j>\sigma_i$ then $\sigma_{i-1}\sigma_i\cdots\sigma_j$ is a $\312$
  pattern because $\sigma_{i-1}$ is greater than any value in
  $I$. This proves that
  $D_{2i}$ is the rightmost even step among the ones of minimal
  weight. Moreover, if $\sigma_i > \sigma_j$ and
  $\sigma_{j-1}<\sigma_j$, we can again use
  Lemma~\ref{lemma312} to prove that there is a $\312$ pattern where
  $\sigma_j$ is a $2$ and where the $1$ is on the right of $\sigma_i$,
  so $w_i < w_j$ and there are no decreasing steps of minimal
  weight in position $2k$ for $k$ in $I$.

  We use Lemma~\ref{equalTrees} to prove that $A=A'$.
\end{proof}

Similarly we define the map $\BST^0$ building a binary search tree
from a weighted Dyck path equal to the binary search tree associated
with a permutation before applying $\psi_{FV}^0$.

\begin{definition}
  Let $(D,w)$ be a weighted Dyck path. Let $D'$ be the path obtained
  by removing the first step of $D$ and adding a decreasing step at
  its end. Define $\BST^0(D,w)$ as the result of $\BST(D', w)$.
\end{definition}

\begin{figure}[h!t]
  \begin{center}
    \begin{tikzpicture}
      \node (F1) at (0,0) {
        \scalebox{0.35}{
          \begin{tikzpicture}
  \draw[dotted, thick, color=gray!60] (0, 0) grid (18, 6);
  \draw[rounded corners=1, color=black, line width=1] (0, 0) -- (1, 1)
  -- (2, 2) -- (3, 3) -- (4, 4) -- (5, 5) -- (6, 6) -- (7, 5) -- (8,
  4) -- (9, 5) -- (10, 4) -- (11, 3) -- (12, 2) -- (13, 1) -- (14, 0)
  -- (15, 1) -- (16, 2) -- (17, 1) -- (18, 0);

  \draw[dotted, very thick] (2, 0) -- (2, 6); \draw[dotted, very
    thick] (4, 0) -- (4, 6); \draw[dotted, very thick] (6, 0) -- (6,
  6); \draw[dotted, very thick] (8, 0) -- (8, 6); \draw[dotted, very
    thick] (10, 0) -- (10, 6); \draw[dotted, very thick] (12, 0) --
  (12, 6); \draw[dotted, very thick] (14, 0) -- (14, 6); \draw[dotted,
    very thick] (16, 0) -- (16, 6);
  
  \node (pond1) at (5, 5.5) {2}; \node (pond2) at (7, 5.5) {2}; \node
  (pond2) at (11, 3.5) {1};
\end{tikzpicture}
 }};
      \node (F1) at (0.18,-2.7) {
        \scalebox{0.35}{
          \begin{tikzpicture}
  \draw[dotted, thick, color=gray!60] (0, -1) grid (19, 6);
  \draw[rounded corners=1, color=black, line width=1] (1, 1)
  -- (2, 2) -- (3, 3) -- (4, 4) -- (5, 5) -- (6, 6) -- (7, 5) -- (8,
  4) -- (9, 5) -- (10, 4) -- (11, 3) -- (12, 2) -- (13, 1) -- (14, 0)
  -- (15, 1) -- (16, 2) -- (17, 1) -- (18, 0) -- (19, -1);

  \draw[dotted, very thick] (2, -1) -- (2, 6);
  \draw[dotted, very thick] (4, -1) -- (4, 6);
  \draw[dotted, very thick] (6, -1) -- (6, 6);
  \draw[dotted, very thick] (8, -1) -- (8, 6);
  \draw[dotted, very thick] (10, -1) -- (10, 6);
  \draw[dotted, very thick] (12, -1) -- (12, 6);
  \draw[dotted, very thick] (14, -1) -- (14, 6);
  \draw[dotted, very thick] (16, -1) -- (16, 6);
  
  \node (pond1) at (5, 5.5) {2}; \node (pond2) at (7, 5.5) {2}; \node
  (pond2) at (11, 3.5) {1};
\end{tikzpicture}
 }};
      \node (F1) at (6.3,-1.4) {
        \scalebox{1}{
          \begin{tikzpicture}[scale=1]
  \node(N5) at (0,0){5};
  \node(N2) at (-1.5,-1){2};
  \node(N8) at (1.5,-1){8};
  \node(N1) at (-2,-2){1};
  \node(N3) at (-1,-2){3};
  \node(N7) at (1,-2){7};
  \node(N9) at (2,-2){9};
  \node(N4) at (-0.5,-3){4};
  \node(N6) at (0.5,-3){6};
  
  \draw (N5) -- (N2);
  \draw (N5) -- (N8);
  \draw (N1) -- (N2);
  \draw (N3) -- (N2);
  \draw (N3) -- (N4);
  \draw (N7) -- (N8);
  \draw (N8) -- (N9);
  \draw (N6) -- (N7);
\end{tikzpicture}
 }};
    \end{tikzpicture}
    \caption{A weighted Dyck path with its intermediate path and
      its binary search tree of type~$0$.} 
    \label{treeDyck}
  \end{center}
\end{figure}
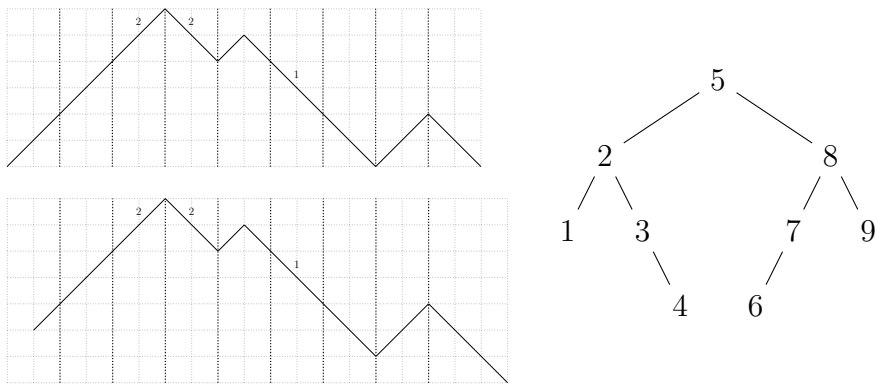

Similarly to Lemma~\ref{equalTrees}, we have the following lemma.

\begin{lemma}
  \label{equalTrees0}
  Let $\sigma$ be a permutation. Then,
  $\BST(\sigma) = \BST^0(\psi_{FV}^0(\sigma))$.
\end{lemma}

The proof of this lemma is the same as the proof of
Lemma~\ref{equalTrees} as $\psi_{FV}^0$ corresponds to the same
algorithm as $\psi_{FV}$ (with the different convention) if we remove
the first step of the resulting path and add a decreasing step at the
end.

The proof of Theorem~\ref{thmSylvester} follows directly.

\begin{proof}[Proof of Theorem \ref{thmSylvester}]
  As it can be seen in Equation~\eqref{reformPsi} in the proof of
  Lemma~\ref{lemmaStatsInvol}, we have $\psi(D,w)_{2i+1} = D_{2i}$ so
  for any weighted Dyck path $(D,w)$, we have
  $\BST(D,w)=\BST^0(\psi(D,w))$. Then, from 
  Lemma~\ref{equalTrees} and Lemma~\ref{equalTrees0} we have
  $\BST(\sigma)=\BST(\Psi(\sigma))$.
\end{proof}

\begin{remark}
  The second fundamental transformation of Foata~\cite{Lot} preserves
  the sylvester class of a permutation which is the same as the mirror
  image of its co-sylvester class. Our bijection is really different
  from Foata's and in particular they are not conjugate to each
  other. Indeed, they do not have the same number of fixed points
  over $\SG_n$ for general $n$. For example, with $n=5$, the second
  fundamental transformation of Foata has $26$ fixed points whereas
  ours has $32$. The two bijections also differ on the number of
  orbits under the action implied by the successive application of the
  map.
\end{remark}

\bibliographystyle{plain}
\bibliography{statsEquivalence}

\begin{thebibliography}{10}

\bibitem{ABZ}
J.-C. Aval, A.~Boussicault, and P.~Zubieta.
\newblock {\em Personal communication}, 2017.

\bibitem{Cor}
S.~Corteel.
\newblock Crossings and alignments of permutations.
\newblock {\em Advances in Applied Mathematics}, 38(2):149--163, 2007.

\bibitem{CW}
S.~Corteel and L.~K. Williams.
\newblock Tableaux combinatorics for the asymmetric exclusion process.
\newblock {\em Advances in applied mathematics}, 39(3):293--310, 2007.

\bibitem{FV}
J.~Fran{\c{c}}on and G.~Viennot.
\newblock Permutations selon leurs pics, creux, doubles mont\'ees et double
  descentes, nombres d'{E}uler et nombres de {G}enocchi.
\newblock {\em Discrete Math.}, 28(1):21--35, 1979.

\bibitem{PBT}
F.~Hivert, J.-C. Novelli, and J.-Y. Thibon.
\newblock The algebra of binary search trees.
\newblock {\em Theoretical Computer Science}, 339(1):129--165, 2005.

\bibitem{HNTT}
F.~Hivert, J.-C. Novelli\mbox{,}~L. Tevlin, and J.-Y. Thibon.
\newblock Permutation statistics related to a class of noncommutative symmetric
  functions and generalizations of the genocchi numbers.
\newblock {\em Selecta Mathematica}, 15(1):105--119, 2009.

\bibitem{NCSF1}
I.~M. Gelfand\mbox{,} D. Krob\mbox{,} A. Lascoux\mbox{,}~B. Leclerc, V.~S.
  Retakh, and J.-Y. Thibon.
\newblock Noncommutative symmetric functions.
\newblock {\em Advances in Mathematics}, 112:218--348, 1995.

\bibitem{Lot}
M.~Lothaire.
\newblock {\em Combinatorics on words}, volume~17.
\newblock Cambridge University Press, 1997.

\bibitem{NTW}
J.-C. Novelli, J.-Y. Thibon, and L.~K. Williams.
\newblock Combinatorial {H}opf algebras, noncommutative {H}all-{L}ittlewood
  functions, and permutation tableaux.
\newblock {\em Adv. Math.}, 224(4):1311--1348, 2010.

\bibitem{SW}
E.~Steingr{\'\i}msson and L.~K. Williams.
\newblock Permutation tableaux and permutation patterns.
\newblock {\em Journal of Combinatorial Theory, Series A}, 114(2):211--234,
  2007.

\bibitem{Tev}
L.~Tevlin.
\newblock Noncommutative analogs of monomial symmetric functions, cauchy
  identity, and hall scalar product.
\newblock {\em Proceedings of FPSAC 2007}, (arXiv:0712.2201), 2007.

\end{thebibliography}

\end{document}